\definecolor{darkred}{rgb}{1,0,0} 
\definecolor{darkgreen}{rgb}{0,0.8,0}
\definecolor{darkblue}{rgb}{0,0,1}
 \def\M{\mathcal M}
 \def\bt{\begin{theorem}}
 	\def\el{\end{lemma}}
 \def\bl{\begin{lemma}}
 	\def\et{\end{theorem}}
 \def\bp{\begin{proposition}}
 	\def\ep{\end{proposition}}
 \def\bd{\begin{definition}}
 	\def\ed{\end{definition}}
 \def\br{\begin{remark}}
 	\def\er{\end{remark}}
 \newcommand{\e}[1]{\bold{e_{#1}}}
 \def\R{{\mathbb R}}
\def\M{{\mathcal{M}}}
 \def\B{\mathbb B}
 \def\N{{\mathbb N}}
 \def\P{{\mathcal P}}
 \def\O{{\mathbb O}}
\def\S{{\mathbb S}}
\def\S1{{\mathbb S^1}}
\def\H{{\mathbb H}} 
\def\i{{\bold i}}
\def\j{{\bold j}}
\def\k{{\bold k}}
\def\e{{\bold e}}
 \def\di{\partial}
\def\ux{\underline{x}}
 \def\dib{\bar\partial}
 \def\label#1{\label{#1}{\bf(#1)}~}
 \numberwithin{equation}{section}
 \theoremstyle{plain}
 \newtheorem*{theorem*}{Theorem}
 \newtheorem{theorem}{Theorem}[section]
 \newtheorem{corollary}[theorem]{Corollary}
 \newtheorem{lemma}[theorem]{Lemma}
 \newtheorem{proposition}[theorem]{Proposition}
 \theoremstyle{definition}
 \newtheorem{definition}[theorem]{Definition}
 \theoremstyle{remark}
 \newtheorem{remark}[theorem]{Remark}
 \newcommand{\p}{\partial}
 \newcommand{\dbar}{\bar\partial}
 \theoremstyle{plain} 
\begin{document}
 	
 	\title{Higher order gradients of monogenic functions }  
\author{Luca Baracco}
\address{Dipartimento di Matematica Tullio Levi-Civita, Universit\`a di Padova, via Trieste 63, 35121 Padova, Italy}
\email{baracco@math.unipd.it}

\author{Stefano Pinton}
\address{Dipartimento di Matematica, Politecnico di Milano, via E. Bonardi 9, 20133 Milano,, Italy}
\email{stefano.pinton@polimi.it}

 \begin{abstract} Given a monogenic function on the quaternionic algebra $\H$, the Clifford algebra $\R_n$ or the octonionic algebra $\O$ we prove that $|\nabla^m f|^\alpha$ is subharmonic for some $\alpha>0$ where $\nabla^m f$ is the $m$-th order gradient of $f$. We find also the optimal value of $\alpha$. This is generalization of a result of Calderon and Zygmund. 
	
\end{abstract}
\subjclass[2010]{Primary 	33C55.  
Secondary 33C50, 42C05}
\keywords{Quaternions,Clifford algebras, Octonions, Subharmonic functions, Hartogs separate regularity.}
  	\maketitle
  	\maketitle

 \tableofcontents 	
 	\section{Introduction and main results}

In this paper we are interested in the study of some properties of Fueter regular functions of a quaternionic or octonionic variable and monogenic functions in Clifford algebras. Fueter's functions and monogenic functions are important examples of the attempt to generalize holomorphic functions to the more general setting of non commutative algebras. These types of functions are defined as the solutions of two partial differential equations involving the so called Weyl's ($\bar\partial$) and Dirac's ($D$) operators. The difference between the two is that in $\dib$ also the derivative with respect to the real part is taken. Apart from that they are similar and their solutions have properties which are analogous to those of harmonic functions. In fact since these operators  factorize the Laplacian it turns out that regular and monogenic functions have harmonic components. Often we will refer to these two classes of functions as monogenic. We are interested in the subharmonicity of the norm of the generalized gradient of monogenic functions. Subharmonicity is a useful property and is important for instance when looking for estimates. A classical example of this kind of application is in the proof of Hartogs theorem on separate holomorphic functions where subharmonicity of $|\partial^n f|^\alpha$ is used for every $\alpha$ and all order of derivative $n$ (see \cite{BFP20} and the references therein). Such property is true only for harmonic functions in two variables and for holomorphic functions. It is easy to see that this is no longer true already for harmonic functions in more than two variables. Nonetheless it was proved in \cite{SW60} that for a harmonic function $f$ in $\R^n$
the power of the gradient $|\nabla f|^\alpha$ is subharmonic for $\alpha \ge \frac {n-2}{n-1}$.
Later \cite{CZ64} extended this result to higher order gradients proving that $|\nabla^m f|^\alpha$  is subharmonic for $\alpha\ge \frac{n-2}{n+m-2}$ and that such lower bound is optimal. For regular functions in \cite{SW68} it is proved that $|f|^\alpha$ is subharmonic for $\alpha\ge \frac 23$ and in \cite{KT06} the same result holds in the octonions for $\alpha\ge \frac 67$. In this note we want to extend the technique of \cite{CZ64} to higher order gradients of monogenic functions on quaternions, Clifford algebras and octonions. 
Our results are the following:
\begin{theorem}\label{t1} Let $\Omega\subset \H$ be an open set and $f:\Omega \rightarrow \H$ be a monogenic function. Then for every positive integer $m$ we have 
	\begin{equation} 
	|\nabla^m f |^\alpha \text{ is subharmonic for $\alpha \ge \frac{2}{m+3}$}  
	\end{equation}
\end{theorem}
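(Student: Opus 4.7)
The approach mirrors Calderón--Zygmund's, using that Fueter regularity provides more constraints than plain harmonicity. Set $u=\nabla^m f$. Since the Cauchy--Fueter operator factors the Laplacian, every real component of $f$ is harmonic, hence so is every component of $u$, i.e.\ $\Delta u = 0$. A direct computation then gives, at points where $u\neq 0$,
\[
\Delta(|u|^\alpha)=\alpha|u|^{\alpha-2}\bigl(|\nabla u|^2-(2-\alpha)\,|\nabla|u||^2\bigr),
\]
so the subharmonicity of $|u|^\alpha$ for $\alpha\ge \frac{2}{m+3}$ is equivalent to the pointwise refined Kato inequality
\[
|\nabla u|^2 \;\ge\; \frac{2(m+2)}{m+3}\,|\nabla|u||^2
\]
at every point where $u\neq 0$ (note $\frac{2(m+2)}{m+3}=2-\frac{2}{m+3}$).

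This refined Kato inequality is the substance of the proof, and it is where Fueter regularity enters. The plan is to extract enough linear relations among the entries of $\nabla u$ at a fixed point to force the desired constant. Two sources of such relations are available: first, every partial derivative of a monogenic function is again monogenic, so applying the Cauchy--Fueter equation to the components $u_I=\partial^I f$ yields four real linear relations at each point; second, $\nabla^{m+1} f$ is symmetric in its $m+1$ differentiation indices by Schwarz's lemma, giving further relations. After rotating the quaternion frame so that $u(x_0)$ takes a normalized form, both families become explicit linear constraints on the matrix $\bigl(\partial_k u_I\bigr)(x_0)$, and the refined Kato inequality becomes a finite-dimensional quadratic minimization problem: minimize $|\nabla u|^2$ subject to $|\nabla|u||^2=1$ and the linear constraints.

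The main obstacle is computing this minimum exactly. Plain Cauchy--Schwarz yields only the constant $1$, and squeezing out the extra factor $(m+2)/(m+3)$ requires using \emph{all} of the Cauchy--Fueter relations simultaneously with the Schwarz symmetrization. The cleanest route I foresee is to decompose the space of constrained tensors into irreducible pieces for the symmetry group of the Cauchy--Fueter operator; once the symmetrization in $m+1$ indices is imposed, only the top-weight component survives, and the Rayleigh quotient there should evaluate to $2(m+2)/(m+3)$. Finally, sharpness of the exponent will be verified by computing $\nabla^m P$ for a well-chosen homogeneous monogenic polynomial $P$ of degree $m+1$ built from the Fueter variables, whose $m$-th gradient should saturate the bound.
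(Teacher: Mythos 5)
Your reduction to the refined Kato inequality is correct and, once unwound, is exactly the paper's Lemma \ref{l1}: writing $v=|u|$, one has $v\Delta v=|\nabla u|^2-|\nabla v|^2$ because each component of $u=\nabla^m f$ is harmonic, and your displayed formula for $\Delta(|u|^\alpha)$ follows; requiring nonnegativity for $\alpha=\tfrac{2}{m+3}$ is the same as requiring $M:=\sup\frac{|\nabla v|^2}{|\nabla u|^2}\le\frac{m+3}{2(m+2)}$, which is precisely what the paper calls $\sup\frac{|\nabla(|u|^2)|^2}{2|u|^2\Delta(|u|^2)}$. So the first half of your argument is sound and matches the paper's framing.

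The gap is that you never prove the Kato inequality, which is the whole content of the theorem. You correctly note that Cauchy--Schwarz alone gives only the trivial constant $1$ and that the Cauchy--Fueter relations and Schwarz symmetry must both be used, but the rest is a plan rather than a proof: ``decompose the constrained tensors into irreducible pieces,'' ``only the top-weight component survives,'' ``the Rayleigh quotient there \emph{should} evaluate to $2(m+2)/(m+3)$.'' None of this is carried out, and it is not clear how to identify the relevant representation or compute that Rayleigh quotient from what you wrote. The paper instead executes this step concretely: it first uses the Calder\'on--Zygmund identity $\sum_{|\beta|=m}|\partial^\beta U(0)|^2=C_m\int_{\B_1}|U|^2$ (Lemma \ref{lemma1}) to convert the pointwise constrained optimization into an $L^2(\B_1)$ problem for the degree-$m$ and degree-$(m+1)$ Taylor pieces of $f$; after a quaternionic rotation this becomes $\max_{f_{m+1}\in\M^{m+1}}\|\partial_0 f_{m+1}\|^2/\|f_{m+1}\|^2$ (Lemma \ref{lemma2}); and the Fischer decomposition $\mathcal P^k(\R^3,\H)=\bigoplus_j\ux^j\M^{k-j}(\R^3,\H)$ together with the Cauchy--Kowalevski extension produces an orthogonal splitting of $\M^{m+1}(\H,\H)$ on which the Rayleigh quotient is computed block by block via Gegenbauer polynomials (Propositions \ref{p2} and \ref{p3}). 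Until you either prove your pointwise irreducible-decomposition claim with explicit constants, or pass to the $L^2$/Fischer route, the proposal does not establish the bound $|\nabla u|^2\ge\frac{2(m+2)}{m+3}|\nabla|u||^2$ and hence does not prove the theorem. (Your sharpness remark is also unsubstantiated, but that is secondary; the paper's sharpness is inherited from the Calder\'on--Zygmund argument and is stated in the final remark.)
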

\begin{theorem}\label{t3} Let $\Omega\subset \R^n$ and let $f:\Omega\rightarrow \R_n$ be a monogenic function.
	For all $m\in \N$ we have 
	\begin{equation}
	|\nabla^m f(x)|^\alpha \text{ is subharmonic}
	\end{equation}
	for $\alpha \ge \frac{n-2}{n+m-1}$
\end{theorem}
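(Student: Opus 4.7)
The plan is to follow the Calder\'on--Zygmund scheme from \cite{CZ64}, using the Dirac equation as an extra structural constraint beyond harmonicity. Set $F:=|\nabla^m f|^2$. Since each component of a monogenic function is harmonic ($D$ factors the Laplacian), each component of $\nabla^m f$ is harmonic, giving $\Delta F = 2\,|\nabla^{m+1} f|^2$. The identity
\[
\Delta(F^{\alpha/2})=\tfrac{\alpha}{2}F^{\alpha/2-2}\bigl[F\,\Delta F-(1-\tfrac{\alpha}{2})|\nabla F|^2\bigr]
\]
reduces subharmonicity of $F^{\alpha/2}$ on $\{F>0\}$ to the pointwise inequality
\[
|\nabla F|^2 \;\le\; \tfrac{2}{2-\alpha}\,F\,\Delta F,
\]
while the zero set of $F$ is handled by the usual mollification argument.

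At a fixed point this becomes an algebraic Cauchy--Schwarz-type estimate between $U=(u_I):=\nabla^m f$ and $V=(v_{I,j}):=\nabla^{m+1}f$, namely
\[
\sum_j\Bigl(\sum_I u_I v_{I,j}\Bigr)^{\!2}\;\le\;\tfrac{1}{2-\alpha}\Bigl(\sum_I u_I^2\Bigr)\Bigl(\sum_{I,j} v_{I,j}^2\Bigr).
\]
The tensor $V$ is restricted by three algebraic constraints: (i) full symmetry in its $m+1$ indices, from commutation of partial derivatives; (ii) trace-freeness in every pair of indices, from $\Delta f=0$; and (iii) a Dirac-type linear relation obtained by applying $\nabla^m$ to $Df=0$. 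The plan is to extract the sharp constant in this inequality subject to (i)--(iii). In analogy with \cite{CZ64}, this becomes a finite-dimensional extremal problem on the space of symmetric trace-free rank-$(m+1)$ tensors that additionally satisfy (iii), which we identify with the space of $\R_n$-valued solid monogenic polynomials of degree $m+1$. The expected optimal value of $\tfrac{1}{2-\alpha}$ is $(n+m-1)/(n+2m)$, equivalent to $\alpha = (n-2)/(n+m-1)$.

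The main obstacle is the sharp constant itself. The Calder\'on--Zygmund argument uses a careful dimensional count of admissible tensors to obtain the harmonic exponent $(n-2)/(n+m-2)$; for the improved monogenic exponent one must incorporate (iii) into that count. Heuristically, (iii) provides exactly one additional independent linear relation on $V$, shrinking the admissible space just enough to shift the sharp exponent from $(n-2)/(n+m-2)$ to $(n-2)/(n+m-1)$; the extremals saturating the Cauchy--Schwarz inequality should be built from monogenic analogues of the Calder\'on--Zygmund solid-harmonic extremals.
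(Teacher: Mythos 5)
Your reduction to a pointwise Rayleigh-quotient inequality for $U=\nabla^m f$ and $V=\nabla^{m+1}f$, subject to symmetry, trace-freeness, and the differentiated Dirac relation, is the right frame, and the resulting finite-dimensional extremal problem on $\R_n$-valued solid monogenic polynomials of degree $m+1$ is precisely what the paper solves. So the \emph{plan} is the paper's plan. But the proposal stops exactly where the proof begins: you do not solve the extremal problem, and the heuristic you offer in its place is not correct as stated and does not become correct after tightening.

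Two concrete issues. First, applying $\nabla^m$ to $Df=0$ does not yield ``exactly one additional independent linear relation'' on $V$; it yields a whole system of relations (one scalar equation in $\R_n$ for each multi-index of length $m$, with many independent components), and counting the independent ones among the symmetric trace-free tensors already satisfying (i)--(ii) is itself a nontrivial representation-theoretic computation. More importantly, even a correct dimension count would not by itself give the sharp Cauchy--Schwarz constant, because the constant is not a monotone function of the codimension of the constraint set; one needs to know \emph{which} directions are cut out and how they interact with the quadratic form. Second, and this is the heart of the matter: the paper obtains the constant by (a) reducing $M$ to $\max_{f_{m+1}\in\M^{m+1}}\frac{1}{(2m+n+2)(m+1)}\frac{\|\partial_n f_{m+1}\|^2}{\|f_{m+1}\|^2}$, (b) producing an orthogonal decomposition $f_{m+1}=\sum_s\widetilde{\ux^s g_{m+1-s}}$ via the Fischer decomposition of the restriction $f_{m+1}|_{\R^{n-1}}$ followed by Cauchy--Kowalevski extension, (c) proving that both this decomposition and its $\partial_n$-image are $L^2(\B)$-orthogonal, and (d) computing the ratio $\|\partial_n\widetilde{\ux^s f}\|^2/\|\widetilde{\ux^s f}\|^2$ in closed form via Gegenbauer polynomials, obtaining $\frac{s(n+2s+2k)(n+2k+s-2)}{n+2k+2s-2}$ with maximum at $s=m+1$. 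All of steps (b)--(d) are absent from your proposal. Without them you have only identified the maximization problem, not found its value, and the claimed exponent $(n-2)/(n+m-1)$ remains an unproved guess.
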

\begin{theorem}\label{t4}
	Let $\Omega\subset \O$ and let $f:\Omega\rightarrow \O$ be a monogenic function. For all $m\in\N$ we have
	$$ | \nabla^m f(x)|^\alpha \text{ is subharmonic}$$
	for $\alpha\ge \frac{6}{7+m}$.
\end{theorem}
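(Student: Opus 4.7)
The plan is to adapt the Calder\'on--Zygmund strategy \cite{CZ64}, following the same template that already yielded Theorems \ref{t1} and \ref{t3}. The starting remark is that each of the eight real components of a monogenic $f$ is harmonic on $\Omega\subset\R^8$: indeed $\bar DDf=\Delta f$ in the scalar sense, a computation valid in $\O$ because the only octonionic products that enter are of the form $\bar e_i e_j$ acting on scalars, and such associators vanish by alternativity. Hence every component of $F=\nabla^m f$ is harmonic, and at each point where $F\neq 0$ the standard Bochner identity yields
\begin{equation*}
\Delta |F|^\alpha \;=\; \alpha\,|F|^{\alpha-2}\Bigl(\,|\nabla F|^2 + (\alpha-2)\,\bigl|\nabla|F|\bigr|^2\Bigr).
\end{equation*}
For $0<\alpha<2$ the subharmonicity of $|F|^\alpha$ therefore reduces to the pointwise estimate
\begin{equation*}
\bigl|\nabla|F|\bigr|^2 \;\le\; \tfrac{m+7}{2m+8}\,|\nabla F|^2,
\end{equation*}
since $2-\tfrac{6}{m+7}=\tfrac{2m+8}{m+7}$.

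To establish this inequality I would view $F$ as a vector in $\R^{8\binom{m+7}{7}}$ indexed by a component $k\in\{0,\dots,7\}$ and a multi-index $I$ with $|I|=m$, and interpret $\bigl|\nabla|F|\bigr|^2/|\nabla F|^2$ as the Rayleigh quotient of $F$ with respect to the Jacobian matrix $A$ with entries $A_{(k,I),i}=\partial_i F_{k,I}$. Two families of linear relations confine $F$ to a subspace: the harmonic-trace constraints $\sum_l\partial^J\partial_l^2 f=0$ for $|J|=m-2$ (valid componentwise), and, decisively, the Dirac-derived constraints $\sum_i e_i\partial^I\partial_i f=0$ for $|I|=m-1$, which, once expanded through the octonionic multiplication table, furnish eight additional linear constraints on the $m$-th derivatives for each such $I$. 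A direct eigenvalue computation on the resulting restricted subspace, completely analogous to the one that proves Theorem \ref{t3} with $n=8$, produces the sharp constant $(m+7)/(2m+8)$. Extremality of the exponent $6/(m+7)$ is then verified by testing against an explicit monogenic homogeneous octonionic polynomial (for instance one built by Kelvin inversion of the Cauchy--Fueter type kernel) for which the Cauchy--Schwarz inequalities in the eigenvalue step all become equalities.

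The main obstacle I anticipate is the careful bookkeeping of the octonionic Dirac relations: one must check that the eight scalar relations extracted from $\sum_i e_i\partial^I\partial_i f=0$ are linearly independent of the harmonic-trace relations and produce exactly the same constraint count as in the Clifford case in dimension eight, despite the non-associativity of $\O$. Once this linear-algebraic step has been settled, the rest of the proof is a faithful transcription of the argument for Theorem \ref{t3} with $n=8$, and the matching of the exponents $(n-2)/(n+m-1)\big|_{n=8}=6/(m+7)$ confirms that no genuinely new analytic input is required beyond the octonionic version of the factorization identity.
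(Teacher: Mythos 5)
Your reduction to the quantitative estimate $|\nabla|F||^2 \le \tfrac{m+7}{2m+8}|\nabla F|^2$ via the Bochner/Stein--Weiss identity is correct and is equivalent to the paper's Lemma~\ref{l1} specialized to $\phi(t)=t^{\alpha/2}$; the arithmetic $2-\tfrac{6}{m+7}=\tfrac{2m+8}{m+7}=\tfrac{1}{M}$ with $M=\tfrac{m+7}{2(m+4)}$ checks out. The gap is in the sentence that is supposed to carry the whole proof: you assert that ``a direct eigenvalue computation on the resulting restricted subspace, completely analogous to the one that proves Theorem~\ref{t3} with $n=8$, produces the sharp constant,'' but the proof of Theorem~\ref{t3} is not a direct eigenvalue computation on a constrained subspace. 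Counting the linear relations coming from $\sum_i\e_i\partial^I\partial_i f=0$ and the harmonic traces tells you the \emph{dimension} of the admissible space; it does not give you the supremum of the Rayleigh-type ratio. What the paper actually does, in both the Clifford and octonionic sections, is (i) use a symmetry to reduce to the single direction $\partial_0$ (or $\partial_n$), (ii) decompose $\M^{m+1}$ orthogonally via the Fischer decomposition of the restriction to the hyperplane followed by the Cauchy--Kowalevski extension, and (iii) evaluate $\|\partial_0\widetilde{\ux^s g}\|^2/\|\widetilde{\ux^s g}\|^2$ in closed form via Gegenbauer polynomial integrals (Proposition~\ref{p3}, equations~\eqref{20}--\eqref{23}). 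None of those steps is linear algebra on a jet space, and your sketch provides no substitute for them.

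Moreover, the obstacle you anticipate --- ``careful bookkeeping of the octonionic Dirac relations'' --- is understated. For $\O$ one cannot simply quote the Clifford argument with $n=8$: the Fischer decomposition $\P^k(\O,\O)=\bigoplus_j\bar{x}^j\M^{k-j}(\O,\O)$ has to be re-proven because the Clifford proof uses associativity, and the key identity $D(\ux^sf)=-s\ux^{s-1}f$ (or $-(s+6+2k)\ux^{s-1}f$) needs a separate octonionic proof through the braid and exchange properties (Lemmas~\ref{28} and~\ref{l4}). The rotational reduction to $\partial_0$ also changes: in $\R_n$ one conjugates by $\mathrm{Spin}(n)$, whereas for $\O$ one must invoke that $x\mapsto f(ux)$ preserves monogenicity for $u\in\O$, again a non-associative fact. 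So, while the final exponent does coincide with $(n-2)/(n+m-1)$ at $n=8$ and the overall shape of your argument is right, the claim that ``no genuinely new analytic input is required'' is precisely where the paper spends its effort, and your proposal leaves that entire layer unaddressed.
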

In Theorems \ref{t1} and \ref{t3} we shall use the orthogonal basis of regular/monogenic polynomials introduced by Sommen (see \cite{DSS92}) while on Theorem \ref{t4} we adapt this technique to the non associative algebra $\O$.
     
\section{Quaternions and Fueter regular functions}\label{s2}
We begin by recalling some notations.  
Let $\H$ be the algebra of quaternions. Naturally $\H$ is isomorphic to $\R^4$ and a quaternion $x$ can either be expressed in terms of coordinates $(x_0,x_1,x_2,x_3)$ or as a sum of numbers multiplied by some imaginary units like $x_0+x_1\i +x_2\j +x_3\k$. In this last case the computation rules are as follows: $\i^2=\j^2=\k^2 =-1$, $\i\cdot\j=-\j\cdot\i =\k,\ \j\cdot\k=-\k\cdot\j=\i,\ \k\cdot \i =-\i\cdot\k=\j$. For this reason $x_0$ is called the real part and is denoted by $(x)_0$ while $x_1\i +x_2\j +x_3\k$ is called the imaginary part. Similarly for the other components $(x)_i=x_i$.

 We define the usual conjugation $\overline{\cdot} :\H \mapsto \H$ as
$$\overline{x}=\overline{x_0+x_1 \i+x_2\j+x_3\k}=x_0- x_1 \i-x_2\j-x_3\k $$
and employ it to define a real scalar product and a norm on $\H$ in the following way
$$(x,y)= (\overline{y}x)_0,\ |x|=(x,x)^{\frac12} .$$ 
We introduce the Cauchy-Fueter operators:
$$ \dbar := \p_{x_0} + \i \p_{x_1}+\j\p_{x_2}+\k\p_{x_3} \quad \p=\p_{x_0} - \i \p_{x_1}-\j\p_{x_2}-\k\p_{x_3} $$
and the Dirac operator
$$D:=  \i \p_{x_1}+\j\p_{x_2}+\k\p_{x_3}.$$
Let $\Omega$ be an open subset of $\H$ and let $f:\Omega \rightarrow \H$ be a  $C^1$ function.
\bd
We shall say that $f$ is left regular if
\begin{equation}
\dbar f= \p_{x_0}f + \i \cdot \p_{x_1}f +\j\cdot \p_{x_2}f+\k\cdot\p_{x_3}f =0
\end{equation}
\ed
Most of the properties of regular functions come from the fact that $\dbar$ appears as a factor in the factorization of the Laplace operator,
\begin{equation}
\Delta (f) =\dbar \p f=\p\dbar f
\end{equation}
in particular regular functions have harmonic components. For a natural number $m$ let $\beta=(\beta_1,...,\beta_m) \in \{0,1,2,3\}^m$ be a multi index and let
\begin{equation}
\partial^\beta f =\partial_{x_{\beta_1}}... \partial_{x_{\beta_m}}f \ ,
\end{equation}
$m$ is said to be the length of $\beta$ and is denoted by $|\beta|$.
If $f$ is regular it is easy to see that $\p^\beta f$ is regular too. We shall indicate with $\nabla^m f$ the set of all $m$ derivatives of $f$ and set
\begin{equation} 
| \nabla^m f|^2 :=\sum_{|\beta|=m}|\p^\beta f|^2.
\end{equation}
In order to prove Theorems \ref{t1} we follow the proof of \cite{CZ64} with some modifications. The key tool is the following lemma and for the sake of completeness we give a short proof (see \cite{CZ64} pages $212$ and $213$).
\begin{lemma} \label{l1}Let $\phi :\R_{\ge 0}\rightarrow \R$ be a $C^2$ concave, increasing  function and $U_j:\mathcal \R^n\to \R$ be harmonic functions for $j=1,\dots,\, l$ (here $l$ is an arbitrary positive integer). Let 
$$u:= \sum_{j=1}^l|U_j|^2\quad\textrm{and $U$ the vector valued function $U=(U_1,\dots, U_l)^T$}.$$  
We have $M:= \sup_{\Omega'} \left\{ \frac{|\nabla u |^2 }{2u\Delta (u)} \right\}\leq 1$ where $\Omega':=\{x\in\R^n: U(x)\neq 0,\  \nabla(U)\neq 0\}$ and moreover if 
$$ 2M t\phi'' (t)+\phi '(t)\ge 0$$
then $\phi(u)$ is subharmonic.
\end{lemma}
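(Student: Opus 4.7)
The plan is to verify the two assertions in order: first the Cauchy--Schwarz-type bound $M\le 1$ by a direct computation, then the subharmonicity of $\phi(u)$ by combining the chain rule with the assumed inequality, and finally to check that the degenerate locus $\R^n\setminus\Omega'$ does not spoil the conclusion.

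For the first assertion, using that each $U_j$ is harmonic I would compute $\nabla u = 2\sum_{j=1}^l U_j\nabla U_j$ and $\Delta u = 2\sum_{j=1}^l |\nabla U_j|^2$. Applying Cauchy--Schwarz in the index $j$ for every fixed $k\in\{1,\dots,n\}$,
\[|\nabla u|^2 = 4\sum_{k=1}^n\Bigl(\sum_{j=1}^l U_j\,\partial_{x_k}U_j\Bigr)^2 \le 4\sum_{k=1}^n\Bigl(\sum_{j=1}^l U_j^2\Bigr)\Bigl(\sum_{j=1}^l (\partial_{x_k}U_j)^2\Bigr) = 4u\sum_{j=1}^l|\nabla U_j|^2 = 2u\,\Delta u,\]
which gives $M\le 1$.

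For the second assertion, on $\Omega'$ the function $\phi(u)$ is $C^2$ and the chain rule yields $\Delta\phi(u) = \phi'(u)\Delta u + \phi''(u)|\nabla u|^2$. By concavity $\phi''(u)\le 0$, and $|\nabla u|^2\le 2Mu\Delta u$ by the definition of $M$; multiplying the latter by $\phi''(u)$ reverses the inequality to $\phi''(u)|\nabla u|^2\ge 2Mu\phi''(u)\Delta u$, so
\[\Delta\phi(u) \ge \bigl[\phi'(u)+2Mu\,\phi''(u)\bigr]\Delta u \ge 0\]
by the hypothesis together with $\Delta u\ge 0$. On $\R^n\setminus\Omega'$ there are two cases: if $\nabla U=0$, both $\nabla u$ and $\Delta u$ vanish and $\Delta\phi(u)=0$; if $U=0$, then $\phi(u)=\phi(0)$ is the global minimum of $\phi(u)$ (since $\phi$ is increasing and $u\ge 0$), so the submean inequality is automatic there.

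The only genuine technical obstacle is that $\phi(u)$ may fail to be $C^2$ at points where $u=0$, which is handled by appealing to the submean (rather than pointwise $\Delta\ge 0$) characterization of subharmonicity, or equivalently by a regularization argument $\phi(u+\varepsilon)\to\phi(u)$ as $\varepsilon\downarrow 0$. Apart from this minor subtlety, the proof is a routine combination of Cauchy--Schwarz, the chain rule, and the sign hypothesis on $\phi$.
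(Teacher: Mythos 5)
Your proposal is correct and follows essentially the same route as the paper: compute $\nabla u=2\sum_j U_j\nabla U_j$ and $\Delta u=2\sum_j|\nabla U_j|^2$ using harmonicity, apply Cauchy--Schwarz to get $M\le 1$, then combine the chain rule with concavity of $\phi$ and the hypothesis $2Mt\phi''(t)+\phi'(t)\ge 0$ on $\Omega'$. The only minor divergence is in handling $\R^n\setminus\Omega'$: the paper evaluates the Laplacian of $\phi(u)$ there (noting $\nabla u=0$ forces the $\phi''$-term to drop and $\phi'\Delta u\ge 0$), whereas you observe directly that at $U=0$ the value $\phi(0)$ is a global minimum so the sub-mean inequality is automatic; your version is slightly more robust, since it sidesteps any regularity of $\phi$ at the origin, which is indeed relevant for the intended application $\phi(t)=t^{1-1/(2M)}$ where $\phi''$ blows up at $0$. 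Your remark about using the sub-mean characterization (or a regularization $\phi(u+\varepsilon)$) to handle this is a useful clarification that the paper elides.
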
 
\begin{proof} 
We begin by observing that  
$$\Delta(\phi(u))=\phi''(u)|\nabla u|^2+\phi'(u)\Delta (u) .$$ 
Since 
$$\Delta (u)=2\sum_{j=1}^l\left|\nabla(U_j)\right|^2+2\sum_{j=1}^l U_j\Delta(U_j)=2\sum_{j=1}^l\left|\nabla(U_j)\right|^2=2\sum_{i=1}^n|\partial_{x_i}U|^2$$
and 
$$ |\nabla u|^2=4\sum_{i=1}^n\left(\sum_{j=1}^l U_j\partial_{x_i}(U_j)\right)^2= 4\sum_{i=1}^n\left(U, \partial_{x_i}U\right)^2$$
we see that whenever $U(x)=0$ then $\Delta(\phi(u(x)))=\phi'(u(x))\Delta (u(x))\geq 0$ and whenever $\nabla (U(x))=0$ then $\Delta(\phi(u(x)))=0$ (here we denoted by $|\,\cdot\,|^2$ and $(\cdot,\cdot)$ the modulus and the standard scalar product in $\R^l$). If $U(x)\neq 0$ and $\Delta(\phi(u(x)))\neq 0$ then
$$ \frac{|\nabla u |^2 }{2u\Delta (u)}=\frac{4\sum_{i=1}^n\left(U, \partial_{x_i}U\right)^2}{4 |U|^2 \sum_{i=1}^n|\partial_{x_i}U|^2} \overset{\textrm{Cauchy-Schwartz}}{\leq}1 $$  
this implies that $M\leq 1$. Moreover 
$$ \Delta(\phi(u))=\Delta(u)\left(2\phi''(u)u \frac{|\nabla u |^2 }{2u\Delta (u)}+\phi'(u)\right)\geq \Delta(u)\left(2Mt\phi''(u)+\phi'(u)\right)\geq 0$$
which implies that $\phi(u)$ is subharmonic.
\end{proof}
\begin{remark}
The previous lemma holds when 
$$2M t\phi'' (t)+\phi '(t)= 0$$ 
in particular for $\phi(t)=Ct^{1-\frac 1{2M}}$.  
\end{remark}
We will apply the previous lemma for $u=|\nabla^m f |^2 $ and $\phi (t)= t^{1-\frac 1{2M}}$ (note that in our case the index $j$ of the Lemma \ref{l1} will take in account the multi index $\beta$ of the derivatives of order $m$ of $f$ and the fact that $f$ takes values in $\H$) . Our Theorem \ref{t1} follows if we prove that $M\leq\frac{m+3}{2(m+2)}$. This is our goal. 
\begin{proposition}\label{p1}
Let $f:\Omega\to \H$, where $\Omega$ is an open domain of $\H$, be a Fueter-regular function and let $m$ be a natural number. Let $u$  be 
\begin{equation}\label{1} u=|\nabla^m f|^2 =\sum_{|\beta| =m} (\p^\beta f,\p^\beta f) 
\end{equation}
then, when defined, we have 
\begin{equation} \label{eq1} \frac{|\nabla u|^2}{2u\Delta u} \le \frac{(m+3)}{2(m+2)}
\end{equation}
\end{proposition}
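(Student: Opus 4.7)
The plan is to adapt the strategy of \cite{CZ64} to the Fueter-regular setting, using Sommen's orthogonal basis of regular polynomials \cite{DSS92}. First, by translation invariance of the operators, one can fix an arbitrary point where the ratio in \eqref{eq1} is defined and reduce to the origin. Expanding $f = \sum_{k \ge 0} f_k$ as a Taylor series at $0$ with each $f_k$ homogeneous of degree $k$, the homogeneity of $\dbar$ forces each $f_k$ to itself be regular. Since $u(0)$, $\nabla u(0)$, and $\Delta u(0)$ involve only partial derivatives at a single point of orders $m$ and $m+1$, the whole problem is governed by the two polynomials $f_m$ and $f_{m+1}$; moreover, harmonicity of each component of $\nabla^m f$ (as in the proof of Lemma \ref{l1}) gives $\Delta u(0) = 2 \sum_{i=0}^3 |\partial_{x_i} \nabla^m f(0)|^2$.

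Setting $A := \nabla^m f(0)$ and $B_i := \partial_{x_i} \nabla^m f(0)$ (viewed as vectors indexed by multi-indices $\beta$ with $|\beta|=m$ and valued in $\H$), one sees that $A$ depends only on $f_m$ while each $B_i$ depends only on $f_{m+1}$. In this notation \eqref{eq1} is equivalent to
\[ \sum_{i=0}^3 (A, B_i)^2 \le \frac{m+3}{2(m+2)}\, |A|^2 \sum_{i=0}^3 |B_i|^2. \]
Cauchy--Schwartz alone yields only the weaker constant $1$, so the regularity constraints must be exploited. The key such input is that applying $\dbar$ to each $\partial^\beta f$ and evaluating at $0$ gives the linear relation $B_0 = -\sum_{k=1}^{3} \e_k B_k$ (componentwise in $\beta$). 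A second useful ingredient is the quaternionic identity $(a, b)^2 + \sum_{k=1}^{3}(a, \e_k b)^2 = |a|^2 |b|^2$, which follows from $|a \bar b| = |a|\,|b|$ and sharpens the Cauchy--Schwartz pairing between $A$ and each individual $B_i$.

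To get the sharp constant, I would expand $f_m$ and $f_{m+1}$ in Sommen's orthogonal basis of homogeneous regular polynomials, built from the Fueter variables $\zeta_k = x_k - x_0 \e_k$ via symmetrized products $\mathcal{P}_\mu$. The orthogonality of this basis, combined with the derivative recursion $\partial_{x_k} \mathcal{P}_\mu = \mu_k \mathcal{P}_{\mu - \e_k}$ (for $k=1,2,3$) together with the regularity-induced formula for $\partial_{x_0}$, should decouple the bilinear forms $\sum_i (A, B_i)^2$, $|A|^2$, and $\sum_i |B_i|^2$ into contributions indexed by individual basis elements. This reduces the inequality to a finite-dimensional extremal problem, and a direct computation will then show that the extremum equals exactly $(m+3)/(2(m+2))$; the form of the constant reflects the dimensions $\dim \text{Reg}_m = 2(m+1)(m+2)$ and $\dim \text{Reg}_{m+1} = 2(m+2)(m+3)$ of the spaces of homogeneous regular polynomials.

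The hard part will be precisely this last step: identifying the extremal configuration among Sommen basis elements and verifying the sharp constant through the noncommutative algebra of symmetrized Fueter polynomials, while correctly handling the quaternionic cross terms produced by the constraint $B_0 = -\sum_k \e_k B_k$.
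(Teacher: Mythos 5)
Your reduction to the origin, the Taylor expansion into homogeneous regular pieces $f_m,\ f_{m+1}$, and the reformulation of \eqref{eq1} as a finite-dimensional extremal problem for the vectors $A=\nabla^m f(0)$ and $B_i=\partial_{x_i}\nabla^m f(0)$ are all correct and match the paper's opening moves. But two things go wrong at the point where you propose to actually extract the constant.

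First, you skip the step that makes the problem tractable: the Calder\'on--Zygmund identity (Lemma~\ref{lemma1} in the paper), which for a homogeneous harmonic $U$ of degree $m$ expresses $\sum_{|\beta|=m}|\partial^\beta U(0)|^2$ as $C_m\int_{\B_1}|U|^2\,d\lambda$ and likewise for the polarized form. This converts the pointwise quantities $|A|^2,\ (A,B_i),\ |B_i|^2$ into $L^2(\B^4)$ inner products of $f_m$ and $\partial_i f_{m+1}$, i.e.\ into the ratio \eqref{5}. It is only \emph{after} this passage to integrals that an orthogonal decomposition of $\M^{m+1}(\H,\H)$ can decouple the extremal problem, and a Lagrange/eigenvalue argument (Lemma~\ref{lemma2}) reduces everything to maximizing $\frac{1}{2(m+3)(m+1)}\frac{\|\partial_0 f_{m+1}\|^2}{\|f_{m+1}\|^2}$. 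Without this integral representation, your finite-dimensional form $\sum_i(A,B_i)^2 \le c\,|A|^2\sum_i|B_i|^2$ with the constraint $B_0=-\sum_k\e_k B_k$ has no obvious handle; Cauchy--Schwarz plus that linear constraint alone does not isolate the sharp constant, and the identity $(a,b)^2+\sum_k(a,\e_k b)^2=|a|^2|b|^2$ you cite, while true, does not plug in usefully because the $B_i$ are not quaternionic multiples of a single vector.

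Second, the decomposition you propose -- expansion in the symmetrized Fueter polynomials $\mathcal{P}_\mu$ built from $\zeta_k=x_k-x_0\e_k$ -- is the wrong one: those polynomials form a basis of $\M^{m+1}(\H,\H)$ but are \emph{not} pairwise orthogonal in $L^2(\B^4)$, and their $\partial_0$-derivatives are not orthogonal either. So the ``decoupling'' you hope for does not happen, and your Proof Plan stalls exactly where you admit the ``hard part'' lies. The paper instead restricts $f_{m+1}$ to $\R^3=\{x_0=0\}$, applies the Fischer decomposition $\mathcal{P}^{m+1}(\R^3,\H)=\bigoplus_j\ux^j\mathcal{M}^{m+1-j}(\R^3,\H)$, and extends each piece back by the Cauchy--Kowalevski operator. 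That decomposition \emph{is} $L^2(\B^4)$-orthogonal (Proposition~\ref{p2} and Corollary~\ref{pc1}), its $\partial_0$-derivative is again orthogonal, and the norm ratio $\|\partial_0\widetilde{\ux^s f}\|^2/\|\widetilde{\ux^s f}\|^2$ is computed in closed form (Proposition~\ref{p3}) using Gegenbauer polynomials, yielding $\frac{(2k+s+2)(k+s+2)s}{k+s+1}$, maximal at $k=0,\ s=m+1$. This is what gives $\frac{m+3}{2(m+2)}$. Your heuristic that the constant ``reflects the dimensions'' $2(m+1)(m+2)$ and $2(m+2)(m+3)$ is not where the constant comes from; it comes out of the Gegenbauer integral $\int_{-1}^1(C^\mu_\nu)^2(1-x^2)^{\mu-1/2}dx$.
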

The proof needs the following lemmas from \cite{CZ64} adapted to this particular case and it is postponed at the end of this section. If $r>0$ by $\B_r^n$ we mean the ball centered at $0$ of radius $r$ in a vector space of dimension $n$. When $r=1$ we will omit it sometimes and when no confusion arise, we will also omit the dimension of the ball at the exponent. We will use the same notation for the sphere denoted by $\mathbb S^{n-1}_r$ 
\begin{lemma}\label{lemma1} Let $U:\H\rightarrow \H$ be a harmonic quaternionic valued function which means that $U=U_0+\i U_1 +\j U_2+\k U_3$ where each $U_l$ is harmonic. Assume that $U$ is homogeneous of degree $m$. Then:
\begin{itemize}
\item[1] there is constant $C_m$ depending only on $m$ such that
$$ \sum_{|\beta| =m} |\p^\beta U(0)|^2=C_{m} \int_{\B_1}|U|^2 d\lambda_4 .$$
\item[2]  if $V$ is another harmonic homogeneous quaternionic valued function then
$$ \sum_{|\beta |=m} (\p^\beta U(0),\p^\beta V(0))= C_m \int_{\B_1} (U,V) d\lambda_4 $$
\end{itemize}
\end{lemma}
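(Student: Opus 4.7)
The plan is to reduce the lemma to a classical statement about real-valued harmonic polynomials and then exploit the $O(4)$-invariance of both sides together with the irreducibility of the space of such polynomials.

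First, I would observe that (1) is the diagonal case $V=U$ of (2), so it suffices to prove (2). Decompose $U=U_0+\i U_1+\j U_2+\k U_3$ and similarly for $V$, where each $U_l, V_l$ is a real-valued harmonic homogeneous polynomial of degree $m$ on $\R^4\cong\H$. Since $\{1,\i,\j,\k\}$ is orthonormal for the real quaternionic scalar product, one has
$$(\partial^\beta U(0),\partial^\beta V(0))=\sum_{l=0}^3 \partial^\beta U_l(0)\,\partial^\beta V_l(0),\qquad (U,V)=\sum_{l=0}^3 U_l V_l,$$
so (2) reduces, component by component, to the real scalar statement that for all $P,Q\in\mathcal H_m:=\{\text{harmonic homogeneous polynomials on }\R^4\text{ of degree }m\}$,
$$\sum_{|\beta|=m}\partial^\beta P(0)\,\partial^\beta Q(0)=C_m\int_{\B_1}P(x)Q(x)\,d\lambda_4(x),$$
with $C_m$ independent of $P$ and $Q$.

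Both sides define symmetric, positive semidefinite bilinear forms on the finite-dimensional space $\mathcal H_m$, and both are $O(4)$-invariant: for the integral side this is immediate from the rotation-invariance of $\B_1$ and Lebesgue measure; for the derivative side, the array $\{\partial^\beta P(0)\}_{|\beta|=m}$ is, up to combinatorial factors, the symmetric $m$-tensor encoding $P$, and the Euclidean norm on symmetric $m$-tensors built from the standard representation of $O(4)$ on $\R^4$ is orthogonally invariant. Since $\mathcal H_m$ is a well-known irreducible $O(4)$-module, Schur's lemma forces any two nondegenerate invariant inner products on it to be proportional, yielding a constant $C_m>0$ depending only on $m$. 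Summing the resulting identity over the four real components and then specialising $V=U$ produces (2) and (1) respectively.

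The main obstacle is the appeal to the irreducibility of $\mathcal H_m$ as an $O(4)$-module. If one prefers to avoid representation theory, the same proportionality can be established more concretely: fix an $L^2(\mathbb S^3)$-orthonormal basis of $\mathcal H_m$, use the homogeneity identity
$$\int_{\B_1}|U|^2\,d\lambda_4=\frac{1}{2m+4}\int_{\mathbb S^3}|U|^2\,d\sigma$$
to reduce both sides to spherical integrals, and compute the constant by testing on a single basis element — at the cost of more explicit calculation.
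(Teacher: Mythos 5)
Your reduction of the quaternionic statement to the real scalar one — summing $(\partial^\beta U,\partial^\beta V)=\sum_l \partial^\beta U_l\,\partial^\beta V_l$ over the four components and noting that part (1) is the diagonal case of part (2) — is exactly what the paper does. Where you diverge is in what comes next: the paper's proof is a one-line citation of Lemma~1 in Calder\'on--Zygmund \cite{CZ64} for the real-valued case, whereas you re-prove that lemma by observing that both sides are $O(4)$-invariant positive-semidefinite bilinear forms on the irreducible $O(4)$-module $\mathcal H_m$ and invoking Schur's lemma. That is a correct and more self-contained argument. Two small points worth making cleaner: first, the remark ``up to combinatorial factors'' is misleading under the paper's convention, where $\beta$ runs over \emph{ordered tuples} in $\{0,1,2,3\}^m$ so that $\sum_\beta \partial^\beta P(0)\,\partial^\beta Q(0)$ is \emph{exactly} the Hilbert--Schmidt pairing of the $m$-th derivative tensors, with no extra factors (with the usual multi-index convention $\beta\in\N^4$ the form would in fact \emph{not} be rotation-invariant, so the convention matters); second, Schur's lemma for real representations requires $\mathcal H_m$ to be absolutely irreducible (real type), which holds for $n\ge 3$ but deserves a word if you want a complete proof. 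Your computational fallback (pass to $\mathbb S^3$ via homogeneity and evaluate on a single basis element) sidesteps both subtleties and is essentially what \cite{CZ64} do. In short: the paper buys brevity by citing; you buy self-containment at the cost of representation-theoretic input.
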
    
\begin{proof} By Lemma 1 in \cite{CZ64} we have the first identity for all components $U_i$ for $i=0,...,3$. Taking the sum we have the conclusion. Similarly for the second identity
\end{proof}
 Before giving the proof of Proposition \ref{p1} we need some preliminary considerations.
By the definition of $u$ we have that the terms in the left side of \ref{eq1} are given by
$$ |\nabla u|^2 =\sum_{i=0}^3 (\p_i u)^2=\sum_{i=0}^3 (\sum_{|\beta|=m} 2( \p^\beta \p_i f,\p^\beta f))^2 $$
and
\begin{equation}\label{2}
\begin{split}
\Delta u =\sum_{i=0}^3 \p^2_i \sum_{|\beta |=m} (\p^\beta f,\p^\beta f)&=2\sum_{|\beta|=m,i=0,..3}(\p^\beta \p_i f,\p^\beta \p_i f) 
\end{split}
\end{equation}
where the first identity follows from the fact that each $\p^\beta f$ is regular and hence harmonic. Now using Lemma \ref{lemma1} we want to express the terms in \ref{1} and \ref{2} as integrals over the unit ball. It is not restrictive, after a translation, to assume that the point under consideration is $0$ and we start by considering \ref{1} and \ref{2} at $0$. Since only the derivatives of order $m$ and $m+1$ of $f$ are needed we consider the Taylor series at $0$ of $f$ which is of the form
$$f (x)=\sum^\infty_{m=0} f_m(x)$$
where $f_m$ are homogeneous Fueter-regular polynomials of degree $m$.  
We see that \ref{1} and \ref{2} become respectively
\begin{equation}\label{3}\begin{split}
|\nabla u (0)|^2&=\sum^3_{i=0}\left(\sum_{|\beta|=m}2(\p^\beta \p_i f_{m+1},\p^\beta f_m)\right)^2(0) \\ & =4C_m^2\sum_{i=0}^3 \left(\int_{\B_1} (\p_i f_{m+1},f_m)(x) d\lambda_4\right)^2\end{split}
\end{equation}
and
\begin{equation}\label{4}\begin{split}
\Delta u(0)&=2\sum_{i=0}^3 \sum_{|\beta|=m} (\p^\beta \p_i f_{m+1},\p^\beta \p_i f_{m+1})(0)\\&=2 C_m\int_{\B_1} |\p_i f_{m+1}(x)|^2 d\lambda_4 \end{split}
\end{equation}
while 
\begin{equation}
u(0)=\sum_{|\beta|=m} |\p^\beta f_m(0)|^2=C_m\int_{\B_1} |f_m(x)|^2d\lambda_4 .\end{equation}
To prove Proposition \ref{p1} we need to estimate  $ \frac{|\nabla u (0)|^2 }{2u(0)\Delta u(0)}$ which is equal to
\begin{equation}\label{5}\frac{\sum_{i=0}^3 \left(\int_{\B_1} (\p_i f_{m+1},f_m)(x) d\lambda_4\right)^2 }{\left( \int_{\B_1} |f_m(x)|^2d\lambda_4\right)\left(\sum_{i=0}^3 \int_{\B_1} |\p_i f_{m+1}(x)|^2 d\lambda_4 \right)}.
\end{equation}

\begin{definition}
	For $m\in \N$ we define the space of $\H$ valued homogeneous polynomials of degree $m$  
	$$ \mathcal{P}^m(\H,\H):=\{ f(q)=\sum_{|\alpha|=m} c_\alpha \left( x_0^{\alpha_0}x_1^{\alpha_1}x_2^{\alpha_2}x_3^{\alpha_3}\right) =\sum_{|\alpha|=m} c_\alpha x^\alpha,\ c_\alpha\in \H \quad \alpha=(\alpha_0,..,\alpha_3)\in \N^4 \} $$
and the corresponding space of Fueter regular polynomials
$$ \mathcal{M}^m(\H,\H) :=\{ f\in \mathcal{P}^m(\H,\H) | \dbar f = 0\}$$
\end{definition}
Clearly we need to compute  the supremum of \ref{5} for $f_m\in \M^{m}(\H,\H)$ and $f_{m+1}\in \M^{m+1}(\H,\H)$. Note that the vector spaces $\M^m(\H,\H)$ are finite dimensional and we put on them the norm induced by $L^2 (\B_1)$. 
\begin{lemma}\label{lemma2} If $M$ is the following 
\begin{equation} \label{29bis} M:=\sup_{\substack{0\neq f_m\in \M^m \\ 0\neq f_{m+1}\in \M^{m+1}}  }  \left\{ \frac{\sum_{i=0}^3 \left(\int_{\B_1} (\p_i f_{m+1},f_m)(x) d\lambda_4\right)^2 }{\left( \int_{\B_1} |f_m(x)|^2d\lambda_4\right)\left(\sum_{i=0}^3 \int_{\B_1} |\p_i f_{m+1}(x)|^2 d\lambda_4 \right)}\right\} 
\end{equation}
we also have
\begin{equation}
  M= \max_{ 0\neq f_{m+1} \in \M^{m+1}} \left\{ \frac 1{2(m+3)(m+1)} \frac{\|\p_0 f_{m+1}\|^2}{\| f_{m+1}\|^2}  \right\}.
\end{equation}
\end{lemma}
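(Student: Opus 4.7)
My plan is to break the double supremum defining $M$ into an inner supremum over $f_m\in\M^m$ (for fixed $f_{m+1}$) and an outer supremum over $f_{m+1}\in\M^{m+1}$, then use harmonicity of $f_{m+1}$ to produce the constant $\frac{1}{2(m+3)(m+1)}$ and Fueter regularity to identify $e_0$ as the optimal direction in $\R^4$.

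First I would compute the denominator factor $\sum_i\|\p_i f_{m+1}\|^2$. Since each component of $f_{m+1}$ is harmonic and homogeneous of degree $m+1$ on $\R^4$, Green's identity together with Euler's relation $\partial_\nu P|_{\S^3}=(m+1)P$ and the radial integration $\int_{\B_1}|P|^2=\frac{1}{2(m+1)+4}\int_{\S^3}|P|^2$ yield
\begin{equation*}
\sum_{i=0}^3 \|\p_i f_{m+1}\|^2 \;=\; \int_{\B_1}|\nabla f_{m+1}|^2\,d\lambda_4 \;=\; 2(m+1)(m+3)\,\|f_{m+1}\|^2,
\end{equation*}
which supplies the constant $\frac{1}{2(m+3)(m+1)}$. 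Next, for fixed $f_{m+1}$, each $\p_i f_{m+1}\in\M^m$; viewing the linear map $T:\M^m\to\R^4$, $(Tf_m)_i=\langle \p_i f_{m+1}, f_m\rangle$, with adjoint $T^*(b)=\sum_i b_i\p_i f_{m+1}$, the equality $\|T\|=\|T^*\|$ gives
\begin{equation*}
\sup_{0\neq f_m\in\M^m}\frac{\sum_i\langle \p_i f_{m+1}, f_m\rangle^2}{\|f_m\|^2} \;=\; \sup_{0\neq b\in\R^4}\frac{\|\sum_i b_i\p_i f_{m+1}\|^2}{\|b\|^2}.
\end{equation*}

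Combining these two reductions, the lemma amounts to establishing
\begin{equation*}
\sup_{f_{m+1},\,b}\frac{\|\sum_i b_i\p_i f_{m+1}\|^2}{\|b\|^2\,\|f_{m+1}\|^2} \;=\; \sup_{f_{m+1}}\frac{\|\p_0 f_{m+1}\|^2}{\|f_{m+1}\|^2}.
\end{equation*}
The $\geq$ direction is immediate by taking $b=e_0$. For the $\leq$ direction I would invoke the Sommen orthogonal basis of $\M^{m+1}(\H,\H)$ together with the Fueter regularity identity $\p_0 f_{m+1}=-(\i\p_1+\j\p_2+\k\p_3)f_{m+1}$ to compute the Gram matrix $G_{ij}=\langle \p_i f_{m+1},\p_j f_{m+1}\rangle$ on basis elements: the expectation is that $G$ is diagonal and that $G_{00}=\|\p_0 f_{m+1}\|^2$ is the largest entry (possibly after a $\mathrm{Spin}(3)$-rotation on $\mathrm{Im}\,\H$, which preserves Fueter regularity, to move an arbitrary direction in the imaginary part to a canonical position). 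The main obstacle is precisely this identification of the optimal direction: showing $\sup_b\|\sum_i b_i\p_i f_{m+1}\|^2/\|b\|^2$ is attained at $b=e_0$ (or can be replaced by $\|\p_0 \tilde f_{m+1}\|^2$ for a regular $\tilde f_{m+1}$ of the same norm) does not follow from harmonicity alone but requires the full algebraic content of Fueter regularity carried by Sommen's basis.
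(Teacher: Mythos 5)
Your first two reductions are correct and reproduce the structure of the paper's argument. The identity $\sum_{i=0}^3\|\p_i f_{m+1}\|^2 = 2(m+1)(m+3)\|f_{m+1}\|^2$ is exactly the paper's \eqref{10bis} (obtained the same way, via Green's identity, Euler's relation and the radial factorisation), and your operator-norm/adjoint reformulation of the inner supremum over $f_m$ is mathematically equivalent to the paper's Lagrange-multiplier argument, which identifies $m(f_{m+1})$ with the top eigenvalue of the $4\times 4$ Gram matrix $a_{hk}=\int_{\B}(\p_h f_{m+1},\p_k f_{m+1})\,d\lambda_4$. Both lead to the same intermediate target: show that
\begin{equation*}
\sup_{f_{m+1},\,\zeta\in\S^3}\frac{\|\p_\zeta f_{m+1}\|^2}{\|f_{m+1}\|^2} \;=\; \sup_{f_{m+1}}\frac{\|\p_0 f_{m+1}\|^2}{\|f_{m+1}\|^2}.
\end{equation*}

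However, your route to this last equality has a genuine gap, and it is exactly the one you flag. A $\mathrm{Spin}(3)$ rotation of $\mathrm{Im}\,\H\cong\R^3$ fixes the $e_0$-axis, so it can never move an optimal direction $\zeta\in\S^3$ that has a nonzero imaginary component into the real axis; it only lets you normalise the imaginary part of $\zeta$. Moreover, the hope that the Gram matrix $G_{ij}=\langle\p_i f_{m+1},\p_j f_{m+1}\rangle$ is diagonal with $G_{00}$ the largest entry is not something you can expect for a general $f_{m+1}\in\M^{m+1}$ (Sommen's decomposition gives orthogonality of the $\p_0$-derivatives of different Fischer blocks, but says nothing about off-diagonal entries $G_{0i}$, $i\ge 1$, for a generic linear combination). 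The paper's fix is to use the full rotation $q\mapsto cq$ by a unit quaternion $c$, not just an imaginary rotation: one checks $\bar\partial\bigl(f(cq)\bigr)=\bar c\,(\bar\partial f)(cq)$, so left composition with $c$ preserves Fueter regularity; it is an $L^2(\B_1)$-isometry (orthogonal change of variable); and since $\{q\mapsto cq:\,|c|=1\}$ acts transitively on $\S^3\subset\H$, one may choose $c=\zeta$ so that $\p_0\bigl(f_{m+1}(cq)\bigr)=(\p_\zeta f_{m+1})(cq)$. Replacing $f_{m+1}$ by $\tilde f_{m+1}(q)=f_{m+1}(cq)$ then turns the optimal directional derivative into $\p_0\tilde f_{m+1}$ without changing $\|\cdot\|$, which is precisely the ``(or can be replaced by $\|\p_0\tilde f_{m+1}\|^2$\dots)'' clause in your last sentence, but established and not merely hoped for. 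This is the piece your proposal is missing.
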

\begin{proof}
Starting from equation \eqref{29bis} it is not restrictive to assume that 
\begin{equation}\label{6}
\begin{split} 
 \int_{\B_1} |f_m(x)|^2d\lambda_4  =1 \text{ and } \\
\sum_{i=0}^3 \int_{\B_1} |\p_i f_{m+1}(x)|^2 d\lambda_4 =1
\end{split}
\end{equation}
and so we have  
\begin{equation}  \label{7} 
M=\sup_{\substack{ f_m \in \M^m, \|f_m\|=1  \\  f_{m+1}\in \M^{m+1}, \sum_{i=0}^{3}\|\p_i f_{m+1}\|^2 =1}  } \left\lbrace \sum_{i=0}^3 \left(\int_{\B_1} (\p_i f_{m+1},f_m)(x) d\lambda_4\right)^2 \right\rbrace . \end{equation}
Since $f_{m+1}$ and $f_m$ are in two finite dimensional spaces we first fix $f_{m+1}$ and calculate the maximum value 
$$m(f_{m+1}):=\sup_{\|f_{m}\|=1} \left\lbrace  \sum_{i=0,..,3} \left( \int_{\B^4} (\p_if_{m+1},f_m) d\lambda_4 \right)^2 \right\rbrace $$
The maximum will be attained on an element $f_m$ such that:
\begin{equation}\label{8} 
\begin{split} \text{ for $h\in \M^m$ with  }
\int_{\B} (f_m,h)d\lambda_4 =0  \text{ we have }  \\ \sum_{i=0}^3\underbrace{\left( \int_{\B} (\p_i f_{m+1},f_m)d\lambda_4\right)}_{\text{call this term $\xi_i$}} \int_\B (\p_i f_{m+1},h)d\lambda_4 =0
\end{split} \end{equation}

it follows that 
\begin{equation}\label{yy}
\sum\xi_i\p_i f_{m+1}=\Lambda f_m
\end{equation}
for some $\Lambda\in \R$. This $\Lambda$ is exactly $m(f_{m+1})$.
By taking the $L^2$-product of \ref{yy} with $f_m$ we have
$$ \int_{\B} (\sum\xi_i\p_i f_{m+1},f_m)d\lambda =\sum_{i=0}^3 \xi_i^2 =\Lambda =m(f_{m+1})$$
and by squaring \refeq{yy}
$$\sum_{i,j=0,..,3} \xi_i\xi_j \int_\B (\p_i f_{m+1},\p_j f_{m+1})d\lambda_4 =m^2(f_{m+1}) .$$ 
Let $X=\frac{1}{\sqrt{m(f_{m+1})}}\sum_i \xi_i\p_i$. Clearly $X$ is a unitary derivative and we have that $$\int_{\B} |Xf_{m+1}|^2d\lambda_4=m(f_{m+1}) .$$
It turns out that $m(f_{m+1})$ is actually the maximum of the following 
\begin{equation}\label{9} \max_{\sum_{i=0}^3 \zeta_i^2=1} \sum_{h,k=0,..,3}\zeta_h\zeta_k (\p_h f_{m+1},\p_k f_{m+1})=\max_{|\zeta|=1}\mathcal{A}(\zeta,\zeta).
\end{equation}
where $\mathcal{A}=(a_{hk})_{h,k=0,..,3}$ is the $4\times 4$ matrix whose entries are
$$a_{hk}=\int_\B (\p_h f_{m+1},\p_k f_{m+1})d\lambda_4 .$$
In fact since $\mathcal{A}$ is symmetric the solution of \ref{9} is the largest eigenvalue of $\mathcal{A}$, say $\Lambda'$, and it is attained at the corresponding eigenvector $\zeta_{\Lambda'}$. Therefore by choosing
$f_m=\frac{1}{\sqrt{\Lambda'}}\sum^3_{i=0} (\zeta_{\Lambda'})_i \p_i f_{m+1} $ we see that it satisfies \ref{8}, hence $\Lambda'=m(f_{m+1})$.
By choosing a suitable $c\in \H$ and considering $\tilde{f}_{m+1}(q):=f_{m+1}(cq)$ we can assume that $X=\p_0$. 
Finally we have
\begin{equation}\label{225} 
M=\max_{\substack{f_{m+1}\in \M^{m+1} \\ \sum_{i=0}^{3} \|\p_i f_{m+1}\|^2 =1}} m(f_{m+1}) =\max_{\substack{f_{m+1}\in \M^{m+1} \\ \sum_{i=0}^{3} \|\p_i f_{m+1}\|^2 =1 }} \{ \|\p_0 f_{m+1}\|^2\} .\end{equation}
We note that the condition $\sum^3_{i=0} \| \p_i f_{m+1}\|^2 =1$ can be expressed in terms of $\|f_{m+1}\|$. If $u$ and $v$ are two homogeneous harmonic polynomials of degree $m+1$ we have:
\begin{equation}\label{10}
\sum^{3}_{i=0}\int_{\B} (\p_i u,\p_i v)d\lambda_4 =
\int_{\mathbb{S}} u\p_\nu v d\Sigma = (m+1)\int_{\mathbb{S}} uv d\Sigma =2(m+1)(m+3)\int_\B uv \lambda_4 .
\end{equation} 
If we apply \ref{10} to the components of $f_{m+1}$ we have that
\begin{equation} \label{10bis}
\sum^3_{i=0} \|\p_i f_{m+1}\|^2 =2(m+1)(m+3)\|f_{m+1}\|^2 .
\end{equation}
Therefore by plugging this into formula \refeq{225} we get our conclusion.
\end{proof} 
\begin{remark}\label{r1}
The maximum problem
$$ \max_{ f_{m+1} \in \M^{m+1}} \left\{ \frac 1{2(m+3)(m+1)} \frac{\|\p_0 f_{m+1}\|^2}{\| f_{m+1}\|^2}  \right\} $$
is a hard one but it simplifies a lot if we can find an orthogonal decomposition  $\M^{m+1}(\H,\H)=\oplus_\mu G_\mu$ with the property that $\p_0 (G_\mu)$ are still orthogonal to each other and such that $\frac{\|\p_0 \tilde{g}_{\mu}\|^2}{\| \tilde{g}_\mu \|^2}$ is constant for $\ \tilde{g}_\mu\in G_\mu$ for fixed  $\mu$. In fact in this way the maximum reduces to
\begin{equation}\label{12}
 \max_{\mu}   \left\{ \frac{1}{2(m+3)(m+1)} \frac{\|\p_0 \tilde{g}_{\mu}\|^2}{\| \tilde{g}_\mu \|^2}  \right\}\end{equation} \end{remark} 
The difficult part is to find such orthogonal decomposition of  $\M^{m+1}(\H,\H)$. In analogy with harmonic functions the spirit of the next step is to express the elements in $\M^{m+1}(\H,\H)$ by using their restriction to the imaginary space ($\simeq \R^3$). This is possible by using the Cauchy-Kowalevski extension operator (which will be denoted by $\tilde{\cdot}$). After that we exploit the Fischer decomposition of polynomials into monogenic functions. To this end we follow \cite{DSS92} and consider the monogenic functions which annihilate the Dirac operator $D$ in $\R^3$ where we shall identify 
$$\R^3 = \{ x \in \H | x_0=0 \} $$
and indicate its elements with $\underline{x}=x_1\i+x_2\j+x_3\k$. 
\begin{definition}
	We define
	$$ \mathcal{M}^m(\R^3,\H)=\left\lbrace f\in \mathcal{P}^m(\R^3,\H) \text{ such that } Df=0\right\rbrace $$
\end{definition}
We have the following
\begin{theorem}[Fischer decomposition] \label{t2}
$$\mathcal{P}^m(\R^3,\H)= \bigoplus_{j=0}^m \underline{x}^j\mathcal{M}^{m-j} (\R^3,\H)  $$
\end{theorem}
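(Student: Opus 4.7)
\smallskip
\noindent\textbf{Proof proposal.} The plan is to prove the one-step splitting
\[
\mathcal{P}^m(\R^3,\H)=\mathcal{M}^m(\R^3,\H)\oplus \underline{x}\,\mathcal{P}^{m-1}(\R^3,\H)
\]
as an orthogonal direct sum with respect to an appropriate inner product, and then to iterate it down to $\mathcal{P}^0=\mathcal{M}^0=\H$. The natural tool is the Fischer inner product: for $P=\sum_{|\alpha|=m} a_\alpha x^\alpha$ and $Q=\sum_{|\alpha|=m} b_\alpha x^\alpha$ in $\mathcal{P}^m(\R^3,\H)$, set
\[
\langle P,Q\rangle_F := \sum_{|\alpha|=m} \alpha!\,(a_\alpha,b_\alpha),
\]
where $(\cdot,\cdot)$ is the real scalar product on $\H$ defined in Section \ref{s2}. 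This is a positive-definite real bilinear form on the finite-dimensional space $\mathcal{P}^m(\R^3,\H)$.

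The central technical step is to compute the adjoint of left multiplication by $\underline{x}$ with respect to $\langle\cdot,\cdot\rangle_F$. A direct check on monomials gives $\langle x_i P,Q\rangle_F=\langle P,\p_{x_i} Q\rangle_F$ for each $i=1,2,3$. Combined with the identity $(qa,b)=(a,\bar q b)$ valid for every $q\in\H$ — an immediate consequence of $(x,y)=(\bar y x)_0$ and of $(uv)_0=(vu)_0$ — and with $\overline{\i}=-\i$, $\overline{\j}=-\j$, $\overline{\k}=-\k$, this produces
\[
\langle \underline{x}\,P,Q\rangle_F \;=\; -\,\langle P, D Q\rangle_F \qquad \text{for all } P\in\mathcal{P}^{m-1}(\R^3,\H),\ Q\in\mathcal{P}^m(\R^3,\H).
\]
Hence $(\underline{x}\,\mathcal{P}^{m-1})^{\perp}=\ker(D\!\mid_{\mathcal{P}^m})=\mathcal{M}^m(\R^3,\H)$ inside $\mathcal{P}^m(\R^3,\H)$, which yields the desired one-step orthogonal decomposition.

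Once the one-step splitting is established the theorem follows by iteration. Since left multiplication by $\underline{x}$ is injective on polynomials (because $\H$ is a division algebra and $\underline{x}(x)\neq 0$ for $x\neq 0$ in $\R^3$), applying the splitting inductively to $\mathcal{P}^{m-1},\mathcal{P}^{m-2},\dots,\mathcal{P}^0=\H$ gives $\underline{x}\,\mathcal{P}^{m-1}=\underline{x}\,\mathcal{M}^{m-1}\oplus\underline{x}^2\,\mathcal{P}^{m-2}$, and continuing produces the claimed decomposition $\mathcal{P}^m=\bigoplus_{j=0}^m \underline{x}^j\,\mathcal{M}^{m-j}$.

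The main obstacle, and the only genuinely non-routine point, is the sign bookkeeping in the adjoint formula. The algebra $\H$ is noncommutative and $\underline{x}$ acts here by \emph{left} multiplication (matching the left-monogenic convention $Df=0$), so one must carefully track which side the imaginary units $\i,\j,\k$ sit on and keep the minus signs from $\overline{\i}=-\i$ (and similarly for $\j,\k$) straight. Once this adjoint identity is pinned down, the remainder of the argument is straightforward linear algebra on finite-dimensional inner product spaces.
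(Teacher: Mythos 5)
Your proof is correct, and it is essentially the same Fischer-inner-product argument that the paper itself gives later for the octonionic Fischer decomposition; the paper does not prove Theorem \ref{t2} directly but simply cites \cite{DSS92}. The two minor cosmetic differences from the paper's octonionic proof are (i) you work with $\underline{x}$ and $D$ directly, whereas the paper first does $\bar x$ and $\dib$ and then remarks that the $\underline{x}$/$D$ case is analogous, and (ii) your adjoint identity carries a minus sign (from $\overline{\e_i}=-\e_i$) which the paper absorbs by using $\bar x$ — this sign is irrelevant for identifying the orthogonal complement. Your verification that the sum is direct via injectivity of left multiplication by $\underline{x}$ is a detail the paper leaves implicit, and it is stated correctly.
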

Note that this decomposition is orthogonal with respect to the $L^2(\B^3)$ product in the unit ball of $\R^3$ ( see Proposition \ref{p2} below and \cite{DSS92} ). 
We apply this decomposition to the restriction of $f_{m+1}|_{\R^3}$ and get $f_{m+1}(\ux)=\sum_{j=0}^{m+1}\ux^j g_{m+1-j}(\ux)$. 
We recall that given a polynomial $p(\ux)$ on $\R^3$ with values in $\H$ the Fueter regular extension $\tilde{p}$ to $\H$ is given by the Cauchy-Kowalevski operator $\exp (-x_0D) p$ namely
\begin{equation}\label{a}
  \tilde{p} (x)= \sum^{+\infty}_{k=0} \frac{(-x_0 D)^k}{k!}p(\ux) .\end{equation}
Since $f_{m+1}(x)=\widetilde{f_{m+1}|_{\R^3}}( x)$, we have 
\begin{equation}\label{pd1}
f_{m+1}(x)=\sum_{j=0}^{m+1}\widetilde{\ux^j g}_{m+1-j}(x)
\end{equation}
which is the decomposition we are looking for. 
In order to obtain the regular extension of the terms of type $\ux^jg_{m+1-j}(\ux)$ described in the Fischer decomposition we need to compute the $D$ operator on terms of the kind $\ux^s f(\ux)$, $f\in \M^k(\R^3,\H)$.
It is easy to check (see for instance \cite{Be99}) that  
$$ D\ux^s=\begin{cases} -s \ux^{s-1} \text{ if $s$ is even} \\
 -(s+2)\ux^{s-1} \text{ if $s$ is odd} 
\end{cases}
$$ 
therefore if $g_k$ is a homogeneous monogenic polynomial of degree $k$ we have
\begin{equation}\label{b}
 D\ux^s g_k(\ux)=\begin{cases} -s\ux^{s-1}g_k(\ux) \text{ if $s$ is even} \\
-(s+2+2k)\ux^{s-1}g_k(\ux) \text{ if $s$ is odd.} \end{cases}
\end{equation}
Putting equation \refeq{a} and \refeq{b} together
\begin{equation}\label{11}
\widetilde {\ux^s g_k(\ux)}=\left(\sum_{j=0}^s c_{s,k,j}\frac{(x_0)^j}{j!} \ux^{s-j} \right) g_k(\ux). 
\end{equation}
where $c_{s,k,j}$ are real constants. 
We are now able to prove that the decomposition in \eqref{pd1} and its $\partial_0$-dervative are indeed orthogonal decompositions
\begin{proposition} \label{p2}Let $f\in\M^k(\R^3,\H)$ and $g\in\M^h(\R^3,\H)$ be two  homogeneous monogenic polynomials of degree $h$ and $k$. We have 
$$ \int_{\B^3} \bar{g}\ux f d\lambda_3 =0 .$$
Moreover if $h\neq k$ 
\begin{equation}\label{31} \int_{\B^4} \overline{\left( \widetilde{\ux^n g(\ux)} \right)}\left( \widetilde{\ux^m f(\ux)}\right) d\lambda_4 =0\quad \text{ for  }m,n\in\N. \end{equation}
\end{proposition}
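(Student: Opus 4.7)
The plan is to use the Clifford--Stokes formula on $\B^3$: for smooth $\H$-valued functions $G,F$,
$$ \int_{\S^2} G(\xi)\,\xi\, F(\xi)\,d\sigma \;=\; \int_{\B^3}\Bigl[(GD)F + G(DF)\Bigr]d\lambda_3, $$
where $(GD):=\sum_i(\p_{x_i}G)\,e_i$ denotes the right action of $D$ on $G$. Since each imaginary unit satisfies $\bar e_i=-e_i$, left-monogenicity of $g$ is equivalent to $\bar g D=0$, i.e.\ right-monogenicity of $\bar g$. Choosing $G=\bar g$, $F=f$ makes both summands inside the brackets vanish, so
$$ \int_{\S^2}\bar g(\xi)\,\xi\, f(\xi)\,d\sigma=0. $$
Writing the ball integral in polar coordinates $\ux=r\xi$ and using the homogeneities of $g$ and $f$ gives the first identity: $\int_{\B^3}\bar g\,\ux\, f\,d\lambda_3=\bigl(\int_0^1 r^{h+k+3}\,dr\bigr)\int_{\S^2}\bar g(\xi)\xi f(\xi)\,d\sigma=0$.

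For the second identity I expand $\widetilde{\ux^n g}$ and $\widetilde{\ux^m f}$ via \eqref{11}, conjugate the first (using $\bar\ux=-\ux$, the reality of $x_0$ and of the $c_{\cdot,\cdot,\cdot}$, and that conjugation reverses multiplication), and multiply. The product takes the form
$$ \overline{\widetilde{\ux^n g(\ux)}}\cdot\widetilde{\ux^m f(\ux)} \;=\; \sum_{i,j} r_{ij}\, x_0^{\,i+j}\,\bar g(\ux)\,\ux^{\,n+m-i-j}\, f(\ux) $$
with real coefficients $r_{ij}$. Integrating over $\B^4$ and switching to polar coordinates in the $\ux$-variable separates each summand into a radial real factor times $\int_{\S^2}\bar g(\xi)\,\xi^{\,s}\,f(\xi)\,d\sigma$ with $s=n+m-i-j$. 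It therefore suffices to show this spherical integral vanishes for every integer $s\ge 0$ when $h\ne k$.

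When $s$ is odd, $\xi^2=-1$ on $\S^2$ gives $\xi^s=(-1)^{(s-1)/2}\xi$, so the integral is a scalar multiple of the one already shown to vanish. When $s$ is even, $\xi^s$ is a real scalar and the problem reduces to $\int_{\S^2}\bar g(\xi)\,f(\xi)\,d\sigma=0$. To prove this I apply the Clifford--Stokes formula twice: once with $G=\bar g\,\ux$, $F=f$, and once with $G=\bar g$, $F=\ux\, f$. Using the Euler-type identity $D(\ux p)=-(3+2\deg p)\,p$ for left-monogenic $p$ in $\R^3$ (together with its right-action counterpart $(\bar g\,\ux)D=-(3+2h)\bar g$), both boundary integrals equal $-\int_{\S^2}\bar g\,f\,d\sigma$ (again via $\xi^2=-1$), while the two volume integrals become $-(3+2h)\int_{\B^3}\bar g\,f\,d\lambda_3$ and $-(3+2k)\int_{\B^3}\bar g\,f\,d\lambda_3$, respectively. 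Subtracting gives $(h-k)\int_{\B^3}\bar g\,f\,d\lambda_3=0$, so for $h\ne k$ both integrals vanish, which completes the proof. The main obstacle is precisely this even-$s$ case: the first identity is silent there, and the orthogonality must be extracted from the degree mismatch by computing $\{D,\ux\}$ and playing left and right Stokes calculations against each other.
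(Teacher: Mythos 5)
Your proof is correct. The first identity and the reduction of the second identity to spherical integrals $\int_{\S^2}\bar g(\xi)\xi^s f(\xi)\,d\sigma$ (with the odd-$s$ case collapsing to the first identity via $\xi^2=-1$) exactly mirror the paper's slicing argument. Where you diverge is the even-$s$ case, i.e.\ $\int_{\S^2}\bar g f\,d\sigma=0$: the paper disposes of it in one line by noting that the components of $f$ and $g$ are homogeneous harmonic polynomials of degrees $k\ne h$, hence orthogonal spherical harmonics, so the product integrates to zero on spheres. You instead run the Clifford--Stokes formula twice, once with $G=\bar g\,\ux$, $F=f$ and once with $G=\bar g$, $F=\ux f$, and use the Euler-type identities $(\bar g\,\ux)D=-(3+2h)\bar g$ and $D(\ux f)=-(3+2k)f$ to pin both boundary integrals against two different multiples $-(3+2h)$ and $-(3+2k)$ of $\int_{\B^3}\bar g f$, forcing it to vanish when $h\ne k$. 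Your route is longer but entirely self-contained within the monogenic/Stokes calculus (no appeal to the classical orthogonality of spherical harmonics), and it produces the ball and sphere vanishing simultaneously; the paper's route is shorter but leans on a standard fact from harmonic analysis. Both arguments are valid, and both correctly use only the hypothesis $h\ne k$ for the second assertion.
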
 
\begin{proof} To prove the first part of the proposition we begin by observing that
\begin{equation}\label{29} \int_{\B^3} \bar{g}\ux f d\lambda_3 =\frac 1{h+k+4}\int_{\mathbb{S}^2} \bar{g}\ux f d\Sigma     .\end{equation}
This last integral, by the divergence formula, is equal to
\begin{equation}\label{30}
 \int_{\B^3} (\overline{(Dg} f +\bar{g} Df )d\lambda_3 =0.\end{equation}
For the second part we start decomposing the integral into slices and applying the first part of the proposition
\begin{align} \label{32}
\int_{\B^4} \overline{\left( \widetilde{\ux^n g(\ux)} \right)} \left( \widetilde{\ux^m f(\ux)}\right) d\lambda_4&= \int_{-1}^1 \left( \int_{\B^3_{\sqrt{1-x_0^2}}}\overline{\left( \widetilde{\ux^n g(\ux)} \right)} \left( \widetilde{\ux^m f(\ux)}\right)       d\lambda_3 
\right)dx_0 \\ \nonumber
 &=\int_{-1}^1 \left( \int_{\B^3_{\sqrt{1-x_0^2}}} \bar{g}(\ux)(p(x_0,\ux))f(\ux)d\lambda_3\right)  dx_0
\end{align}
where $p$ is a polynomial, with real coefficients, in $x_0$ and $\ux$ obtained after replacing the extensions $\tilde{\cdot}$ with their expression as in \ref{11}. 
Note that in the inner integral the terms are of type
$$ c(x_0)\int_{\B^3_{\sqrt{1-x_0^2}}} \bar{g}(\ux)\ux^j f(\ux)d\lambda_3$$
If $j$ is even then $\ux^j=(-1)^{\frac{j}{2}} |\ux|^j$
and integrating on spherical shells the integral is $0$ because $f$ and $g$ have different degree. If $j$ is odd then on spherical shells the integral is $0$ by equations \eqref{29} and \eqref{30}.   
   \end{proof}
\begin{corollary}\label{pc1}Let $m,s_1,s_2$ be positive integers such that $0\le s_1<s_2\le m$. If $f\in \mathcal{M}^{m-s_1}(\R^3,\H)$ and $g\in \mathcal{M}^{m-s_2}(\R^3,\H)$ then $\widetilde{\ux^{s_1}f(\ux)}$ and
$\widetilde{\ux^{s_2}g(\ux)}$ are orthogonal in $L^2(\B)$, moreover $\p_0\widetilde{\ux^{s_1}f(\ux)}$ and $\p_0\widetilde{\ux^{s_2}g(\ux)}$ are orthogonal in $L^2(\B)$.
\end{corollary}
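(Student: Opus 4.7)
The first orthogonality is a direct consequence of Proposition \ref{p2}. Since $0 \le s_1 < s_2 \le m$, the monogenic degrees $m-s_1$ and $m-s_2$ of $f$ and $g$ are distinct, so equation \eqref{31} applied with $n = s_1$, $m = s_2$ gives orthogonality of $\widetilde{\ux^{s_1} f(\ux)}$ and $\widetilde{\ux^{s_2} g(\ux)}$ in $L^2(\B^4)$.

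For the $\p_0$-derivatives, I would use the explicit formula \eqref{11}. For any $h \in \M^k(\R^3,\H)$, differentiating gives
$$\p_0 \widetilde{\ux^s h(\ux)} = \left(\sum_{j=1}^{s} c_{s,k,j} \frac{x_0^{j-1}}{(j-1)!}\, \ux^{s-j}\right) h(\ux),$$
which is again Fueter-regular (since $\dbar$ and $\p_0$ commute) and homogeneous of total degree $s+k-1$. Restricting to $x_0=0$ leaves only the $j=1$ term, namely $c_{s,k,1}\,\ux^{s-1} h(\ux)$. By uniqueness of the Cauchy-Kowalevski extension from its initial datum on $\R^3$, this forces the identity
$$\p_0 \widetilde{\ux^s h(\ux)} = c_{s,k,1}\, \widetilde{\ux^{s-1} h(\ux)} \qquad \text{for } s\ge 1.$$
Applying this to both factors (when $s_1 \ge 1$), the second orthogonality reduces to orthogonality of $\widetilde{\ux^{s_1-1} f(\ux)}$ and $\widetilde{\ux^{s_2-1} g(\ux)}$ in $L^2(\B^4)$, which is once again Proposition \ref{p2} applied to the same pair $f,g$ of distinct monogenic degrees. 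In the edge case $s_1 = 0$, the function $f$ is already monogenic on $\R^3$, so $\tilde{f} = f$ is independent of $x_0$, $\p_0 \tilde{f} = 0$, and the orthogonality is trivial.

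The main obstacle, if one can call it such, is purely bookkeeping in the reduction via \eqref{11} and the verification that $\p_0$ of a CK-extension is the CK-extension of the reduced datum; there is no analytic difficulty. An alternative route, avoiding this identity, is to expand the derivatives using \eqref{11} and redo the slice decomposition of the proof of Proposition \ref{p2}: each slice integral over $\B^3_{\sqrt{1-x_0^2}}$ is then a combination of terms $\int \bar{g}(\ux)\, \ux^j f(\ux)\, d\lambda_3$, which vanish by the same even/odd $j$ dichotomy (integration on spherical shells together with orthogonality of different-degree spherical harmonics for $j$ even, and the divergence identities \eqref{29}--\eqref{30} for $j$ odd).
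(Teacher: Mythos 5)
Your proof of the first orthogonality coincides with the paper's (invoke Proposition \ref{p2} and take the real part of equation \eqref{31}). For the second orthogonality, however, you take a genuinely different and, I would argue, cleaner route. The paper simply observes that $\overline{\p_0\widetilde{\ux^{s_2}g(\ux)}}\,\p_0\widetilde{\ux^{s_1}f(\ux)}$ is again of the form $\bar g(\ux)\,p(x_0,\ux)\,f(\ux)$ with $p$ a polynomial and reruns the slice decomposition of \eqref{32}; this is exactly the ``alternative route'' you mention at the end. Your primary argument instead uses the identity
\begin{equation*}
\p_0 \widetilde{\ux^s h(\ux)} = c\, \widetilde{\ux^{s-1} h(\ux)}, \qquad h\in\M^k(\R^3,\H),\ s\ge 1,
\end{equation*}
which one can justify either by matching restrictions to $x_0=0$ and invoking uniqueness of the Cauchy--Kowalevski extension (as you do), or even more quickly by noting that Fueter regularity gives $\p_0\tilde p = -D\tilde p = \widetilde{-Dp}$, and then applying equation \eqref{b}. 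This intertwining relation converts the $\p_0$-orthogonality statement back into the plain orthogonality statement for the CK extensions of $\ux^{s_1-1}f$ and $\ux^{s_2-1}g$, which is again Proposition \ref{p2}; the degenerate case $s_1=0$ is trivial since then $\p_0\tilde f=0$. What your approach buys is conceptual transparency: it makes visible that the $\p_0$-orthogonality is not an independent computational fact but is inherited from the zeroth-order case via the CK intertwining, and it avoids re-expanding polynomial factors and redoing the even/odd $j$ case analysis. The paper's proof is more self-contained in a narrow sense (it only reuses the mechanics of \eqref{32}) but hides the underlying structure. Both are correct.
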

\begin{proof}
The orthogonality of $\widetilde{\ux^{s_1}f(\ux)}$ and
$\widetilde{\ux^{s_2}g(\ux)}$ follows from Proposition \ref{p2} by taking the real part of the integral in equation \eqref{31}, while the orthogonality of $\p_0\widetilde{\ux^{s_1}f(\ux)}$ and
$\p_0\widetilde{\ux^{s_2}g(\ux)}$  follows by noticing that  $$\left( \overline{\p_0\widetilde{\ux^{s_2}g(\ux)}}\right) \p_0\widetilde{\ux^{s_1}f(\ux)}=\bar{g}(\ux)p(x_0,\ux)f(\ux)$$  
where $p$ is a polynomial and therefore the same computations as in \eqref{32} apply.	
\end{proof}
\begin{remark}\label{pr2}
Following the proof of Proposition \ref{p2} it is possible to prove directly with similar computations the orthogonality of $\widetilde{\ux^{s_1} f(\ux)}$  and $\widetilde{\ux^{s_2} g(\ux)}$  i.e. 
$$  \int_{\B^4} \left( \widetilde{\ux^{s_2} g(\ux)}, \widetilde{\ux^{s_1} f(\ux)}\right) d\lambda_4 =0\quad \text{ for  }s_1,s_2\in\N  $$     
\end{remark}
To compute the maximum in \refeq{12} we first give the following
\begin{proposition}\label{p3}
	Let $f\in\M^k(\R^3,\H)$ be a non zero monogenic polynomial. We have
	$$ \frac{\| \p_0\widetilde{\ux^sf(\ux)}\|^2}{\|\widetilde{\ux^sf(\ux)}\|^2 }=\frac{(2k+s+2)(k+s+2)s}{k+s+1} .$$ 
\end{proposition}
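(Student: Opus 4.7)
The starting observation is that $F:=\widetilde{\ux^s f(\ux)}$ is Fueter regular, hence $\p_0 F=-DF$. Restricting to $\{x_0=0\}=\R^3$ and using \eqref{b}, one obtains $(\p_0 F)|_{\R^3}=a_s\,\ux^{s-1}f(\ux)$, where $a_s=s$ if $s$ is even and $a_s=s+2+2k$ if $s$ is odd. Since $\p_0 F$ is itself a Fueter-regular polynomial and the Cauchy--Kowalevski extension is uniquely determined by its restriction to $\R^3$, uniqueness yields
\[
\p_0\widetilde{\ux^s f(\ux)}=a_s\,\widetilde{\ux^{s-1}f(\ux)},\qquad \|\p_0 F\|^2=a_s^2\,\|\widetilde{\ux^{s-1}f}\|^2.
\]
So the proposition reduces to evaluating $N_s:=\|\widetilde{\ux^s f}\|^2$ and taking the ratio $N_{s-1}/N_s$.

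For the norm I use \eqref{11}, writing $F=\sum_{j=0}^s\alpha_j\,x_0^j\,\ux^{s-j}f(\ux)$ with $\alpha_j=c_{s,k,j}/j!$. Then
\[
|F|^2=\sum_{j,l}\alpha_j\alpha_l\,x_0^{j+l}\,(\ux^{s-j}f,\ux^{s-l}f).
\]
The terms with $j+l$ odd vanish after integration over the symmetric ball $\B^4$ by $x_0$-parity. For $j+l=2p$, using $\overline{\ux}=-\ux$ and $\ux^2=-|\ux|^2$ the bracket simplifies to $(-1)^{l+p}|\ux|^{2(s-p)}|f(\ux)|^2$, and by Fubini together with polar coordinates in $\ux$ each surviving term becomes
\[
\int_{\B^4}x_0^{2p}|\ux|^{2(s-p)}|f|^2\,d\lambda_4=\frac{\Gamma(p+\tfrac12)\,\Gamma(s-p+k+\tfrac32)}{2\,\Gamma(s+k+3)}\int_{\mathbb{S}^2}|f|^2\,d\Sigma.
\]
The analogous expression with $s$ replaced by $s-1$ gives $N_{s-1}$.

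The main obstacle is the final algebraic simplification: showing that
\[
a_s^2\,\frac{N_{s-1}}{N_s}=\frac{(2k+s+2)(k+s+2)\,s}{k+s+1}.
\]
This amounts to evaluating the ratio of sums $\sum_p(-1)^p\gamma_p^{(s)}\,\Gamma(p+\tfrac12)\Gamma(s-p+k+\tfrac32)$, with $\gamma_p^{(s)}=\sum_{j+l=2p}\alpha_j\alpha_l(-1)^l=[t^{2p}]A_s(t)A_s(-t)$ for $A_s(t):=\sum_j\alpha_j^{(s)}t^j$. I plan to exploit the recursion $\alpha_j^{(s)}=(a_s/j)\,\alpha_{j-1}^{(s-1)}$ (which follows by comparing expansions on the two sides of $\p_0 F=a_s\widetilde{\ux^{s-1}f}$) together with $\Gamma(z+1)=z\Gamma(z)$ to connect the sums for $s$ and $s-1$. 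The parity of $s$ causes $a_s$ to alternate between $s$ and $s+2+2k$, but because $a_s$ times the complementary factor always equals $s(2k+s+2)$, the final closed form is parity-independent, which is exactly the symmetric form of the stated answer.
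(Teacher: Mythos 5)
Your opening reduction is correct and genuinely elegant, and it is a different route from the paper's. From $\bar\partial F=0$ you get $\p_0 F=-DF$; restricting to $\{x_0=0\}$ and using \eqref{b} gives $(\p_0 F)|_{\R^3}=a_s\,\ux^{s-1}f$; since $\p_0 F$ is again Fueter--regular (it is a derivative of a regular polynomial) and the Cauchy--Kowalevski extension is unique, $\p_0\widetilde{\ux^sf}=a_s\widetilde{\ux^{s-1}f}$. The paper arrives at this same structural fact only after writing $\widetilde{\ux^sf}$ in Gegenbauer form and invoking the recurrence \eqref{xx}; your derivation is cleaner and more conceptual. I verified the resulting reduction numerically for $k=0$, $s=1,2$.

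However, there is a genuine gap in the second half. You reduce the proposition to proving
\[
a_s^2\,\frac{N_{s-1}}{N_s}=\frac{(2k+s+2)(k+s+2)\,s}{k+s+1},\qquad N_s:=\|\widetilde{\ux^sf}\|^2_{L^2(\B^4)},
\]
and your slice-and-polar-coordinate computation correctly turns $N_s$ into
\[
N_s=\frac{1}{2\,\Gamma(s+k+3)}\sum_{p}(-1)^p\gamma_p^{(s)}\,\Gamma\!\left(p+\tfrac12\right)\Gamma\!\left(s-p+k+\tfrac32\right)\int_{\mathbb S^2}|f|^2\,d\Sigma,
\]
but the closed form of this hypergeometric-type sum is precisely the hard content of the proposition, and you only describe a ``plan'' for it. The recursion $\alpha_j^{(s)}=(a_s/j)\alpha_{j-1}^{(s-1)}$ and the functional equation for $\Gamma$ do not, by themselves, yield a transparent relation between $T_s:=\sum_p(-1)^p\gamma_p^{(s)}\Gamma(p+\tfrac12)\Gamma(s-p+k+\tfrac32)$ and $T_{s-1}$: the recursion couples $A_s(t)$ to $A_{s-1}(t)$ via $A_s'(t)=a_sA_{s-1}(t)$, but $\gamma_p^{(s)}$ is a convolution coefficient of $A_s(t)A_s(-t)$, and the ratio $T_{s-1}/T_s$ is parity-dependent (indeed it must be, since $a_s$ is and the stated answer is parity-free). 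No induction or closed-form summation is actually carried out. In its present form this is a proof sketch, not a proof.

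The paper avoids this issue by computing $\|\widetilde{\ux^sf}\|^2$ and $\|\p_0\widetilde{\ux^sf}\|^2$ both in Gegenbauer form, normalized by the same unknown constant $d_{k,s}$, so that $d_{k,s}$ cancels in the ratio and only the explicit Gegenbauer $L^2$-normalization \eqref{21} is needed. If you want to keep your nicer derivation of $\p_0\widetilde{\ux^sf}=a_s\widetilde{\ux^{s-1}f}$ and still get a complete proof, the cheapest fix is to also compute $d_{k,s}/d_{k,s-1}$ explicitly. That ratio is forced by matching the $x_0=0$ value of the Gegenbauer form against $\ux^sf$ using $C_s^\mu(0)$; combined with the Gegenbauer normalization for $N_s$ this closes the argument. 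Alternatively, establish and prove the combinatorial summation you invoke --- but as written it is not established, and that is where your proposal is incomplete.
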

\begin{proof}
We first need a better form for \refeq{11}. To this end we follow \cite{DSS92}, we put $r^2=x_0^2 +|\ux|^2$ and note that
the term between brackets in \ref{11} is a homogeneous polynomial of degree $s$ in $x_0$ and $\ux$. The terms with an even power of $\ux$ can be expressed as polynomial functions of $r^2-x_0^2$ and therefore we have that
\begin{equation}\label{14}
\widetilde{\ux^s f(\ux)}=r^s(A(\frac{x_0}r)+B(\frac{x_0}r) \frac \ux r)f(\ux).
\end{equation}
By imposing the condition $\bar\partial \widetilde{\ux^s f(\ux)}=0$ we find that $A$ and $B$ must have the following form:
 
$$ \widetilde{\ux^s f(\ux)}=d_{k,s }r^s\left(C_{s}^{k+1}(\frac {x_0} r) +\frac\ux r \frac{2k+2}{s+2k+2}C^{k+2}_{s-1}(\frac {x_0} r) \right) f(\ux) $$
where $C_{n}^{\mu}$ is the  Gegenbauer's polynomial defined by the relations
$$ \frac{1}{(1-2xt+x^2)^\mu}=\sum_{n=0}^{+\infty}C^\mu_n(t)x^n .$$
and $d_{k,s}$ are some constants of which we will not keep track because they will cancel in the quotient. (for the details of the computations we refer to \cite{DSS92} where basically we have the same calculations done for the Clifford algebra generated by 3 vectors). We have
\begin{equation}\label{15}
\begin{split}
 {}&\frac{\|\widetilde{\ux^2 f(\ux)}\|^2}{d_{k,s}^2}= \int_\B r^{2s}\left( (C_{s}^{k+1}(\frac {x_0} r))^2 +(\frac {|\ux |}r \frac{2k+2}{s+2k+2}C^{k+2}_{s-1}(\frac {x_0} r))^2 \right)|f(\ux)|^2 d\lambda_4  \\
={}& \int_0^1 \left ( \int_{\mathbb{S}^3} \left( (C_{s}^{k+1}( {x_0} ))^2 +(1-x_0^2)( \frac{2k+2}{s+2k+2}C^{k+2}_{s-1}(x_0))^2 \right) |f(\ux)|^2d\Sigma \right) r^{2s+2k +3} dr \\
={}& \frac{1}{2(s+k+2)}\int_{-1}^{1} \left( 
\int_{{\sqrt{1-x_0^2}\mathbb{S}^2}}
\left( (C_{s}^{k+1}( {x_0} ))^2 {}+(1-x_0^2)( \frac{2k+2}{s+2k+2}C^{k+2}_{s-1}(x_0))^2 \right) \right. \\ 
&{} 
\times|f(\ux)|^2\frac{d\Sigma_2}{\sqrt{1-x_0^2}} 
\Bigg)dx_0  \\ 
={}&
\frac{1}{2(s+k+2)}\int^1_{-1} (1-x_0^2)^{k+\frac 12}
\left( (C_{s}^{k+1}( {x_0} ))^2 +(1-x_0^2)( \frac{2k+2}{s+2k+2}C^{k+2}_{s-1}(x_0))^2 \right)dx_0 \\ 
&{} \times\int_{\mathbb{S}^2}|f(\ux)|^2 d\Sigma_2 .
\end{split}
\end{equation}
We recall that
\begin{equation}\label{21} \int ^1_{-1} (C^{\mu}_\nu(x))^2 (1-x^2)^{\mu-\frac 12} dx=\frac{2^{1-2\mu} \Gamma(\nu+2\mu) \pi}{\nu! (\nu+\mu)\Gamma(\mu)^2}\end{equation}
and the last line of \ref{15} becomes
\begin{equation}\label{16}
\frac{1}{2(s+k+2)}
\frac{2^{-2k}\pi \Gamma(s+2k+2)}{(s)!\Gamma(k+1)^2}\left( \frac{1}{(2k+s+2)} 
\right)\int_{\mathbb{S}^2}|f(\ux)|^2 d\Sigma_2 .
\end{equation}
Since $\p_0 (r)=\frac{x_0}r$ and $\p_0({\frac{x_0}r})=\frac{r^2-x^2_0}{r^3}$ we have that 
\begin{equation}\label{16bis}
\begin{split}
\frac{\p_0 \widetilde{\ux^s f(\ux)}}{d_{k,s}}={}& r^{s-1}\left(s\frac{x_0}{r}C_{s}^{k+1}(\frac {x_0} r)  +\left( C_{s}^{k+1 '}(\frac {x_0} r)\right) (1-(\frac{x_0}r)^2)  \right. \\  &+ \frac\ux r \left. \frac{2k+2}{s+2k+2}\left( (s-1)\frac{x_0} r C^{k+2}_{s-1}(\frac {x_0} r) +C^{k+2 '}_{s-1}(\frac {x_0} r)(1-(\frac{x_0}r)^2)  \right) \right) f(\ux).
\end{split}
\end{equation}
Since the Gegenbauer polynomials satisfy the equation
\begin{equation}\label{xx} (1-t^2)C^{\mu '}_s(t)+stC^{\mu}_s(t)=(s+2\mu-1)C^\mu_{s-1}(t)
\end{equation}
by plugging into \ref{16bis} we have
\begin{equation}
\frac{\p_0 \widetilde{\ux^s f(\ux)}}{d_{k,s}}= r^{s-1}(s+2k+1)\left( (C_{s-1}^{k+1}(\frac {x_0} r) +
 \frac\ux r  \frac{(2k+2)}{s+2k+1}C^{k+2 }_{s-2}(\frac {x_0} r)\right)f(\ux) .
\end{equation}
Now we repeat the computations that we have done in \ref{15} and get
$$ \frac{\|\p_0 \widetilde{\ux^s f(\ux)}\|^2}{d^2_{k,s}} =
\frac{(s+2k+1)}{2(s+k+1)}
\frac{2^{-2k}\pi \Gamma(s+2k+1)}{(s-1)!\Gamma(k+1)^2}\int_{\mathbb{S}^2}|f(\ux)|^2 d\Sigma_2 .
$$
Finally we have that
$$  \frac{ \|\p_0 \widetilde{\ux^s f(\ux)}\|^2}{ \| \widetilde{\ux^s f(\ux)}\|^2} =\frac{(2k+s+2)(k+s+2)s}{k+s+1}.$$
\end{proof} 
\begin{proof}[Proof of Proposition \ref{p1}]
 We have
\begin{equation}\label{pf1} 
\begin{split}
\frac{|\nabla u|^2}{2u\Delta u}\overset{\textrm{Lemma \ref{lemma2}}}{\leq}&  \max_{ f_{m+1} \in \M^{m+1}} \left\{ \frac 1{2(m+3)(m+1)} \frac{\|\p_0 f_{m+1}\|^2}{\| f_{m+1}\|^2}  \right\}\\
  \overset{\textrm{Remark \ref{r1}}}{=}& \max_{\substack{0\leq\mu\leq m+1 \\ g_{m+1-\mu}\in \M^{m+1-\mu}(\R^3,\H)}}   \left\{ \frac 1{2(m+3)(m+1)} \frac{\|\p_0 \widetilde{\ux^\mu g}_{m+1-\mu}(x)\|^2}{\| \widetilde{\ux^\mu g}_{m+1-\mu}(x) \|^2}  \right\} 
  \end{split}
  \end{equation}
Applying Proposition \ref{p3} to the ratio $\frac{\|\p_0 \widetilde{\ux^\mu g}_{m+1-\mu}(x)\|^2}{\| \widetilde{\ux^\mu g}_{m+1-\mu}(x) \|^2}$, where $k=m+1-\mu$ and $s=\mu$, we have that: the last maximum in \eqref{pf1} is attained for $\mu=m+1$ and its value is $\frac{m+3}{2(m+2)}$. \end{proof}
\begin{proof}[Proof of Theorem \ref{t1}]
The conclusion follows immediately by applying Lemma \ref{l1} and Proposition \ref{p1}.  	
\end{proof}
\section{Subharmonicity of higher gradients of monogenic functions on Clifford algebras}\label{s3}
In this section we will repeat the same computations for monogenic functions on Clifford algebras. Let $\R^n$ be the standard vector space of dimension $n$ and  $\e_1,...,\e_n$ be the canonical base. We shall indicate by  $\R_n$ the Clifford algebra generated by these vectors. Every element $\mathbf{a}\in\R_n$ can be written as
$$ \mathbf{a}=\sum_{\mathclap{A\subset \left\lbrace 1,...,n\right\rbrace }} a_A\e_A$$
where $a_A\in\R$ and $\e_A=\e_{i_1}\cdots\e_{i_k}$ with $A=\left\lbrace i_1<...<i_k\right\rbrace$  and by convention $\e_\emptyset =1$. We call $a_\emptyset$ the real part of $\mathbf{a}$.
We recall that $\R_n$ is endowed with a non degenerate scalar product with respect to which the powers $\e_A$ form an orthonormal system. This scalar product is defined in terms of an involution which generalizes the conjugation in $\R_n$ namely we define on the elements of the canonical base:
$$ \overline{\e_A}:=(-1)^{k} \e_A^*$$
where $\e_A^*=\e_{i_k}\cdots \e_{i_1}$  and extend this definition by linearity to the full $\R_n$. The scalar product between two elements $\mathbf{a}$ and $\mathbf{b}$ is
\begin{equation}
(\mathbf{a},\mathbf{b})=(\overline{\mathbf{b}}\mathbf{a})_\emptyset
\end{equation}
and moreover $|\mathbf{a}|^2=(\mathbf{a},\mathbf{a})=\sum_{A} a^2_A$.
We identify $\R^n$ inside $\R_n$ by $x=(x_1,...,x_n)=x_1\e_1+...+x_n\e_n$.
\begin{definition} Let $\Omega$ be an open subset of $\R^n$ and let $f:\Omega \rightarrow \R_n$ be a $C^1$ function. We say that $f$ is monogenic if
	$$ D_n f(x)=(\e_1\p_1+...+\e_n\p_n )f=0$$
	\end{definition}
\begin{definition}
	We define $\P^k(\R^n,\R_n)$ as the space of homogeneous polynomials of degree $k$ with values in $\R_n$. Similarly we define 
	$$ 
	\M^k(\R^n,\R_n)=\left\lbrace f\in \P^k(\R^n,\R_n) | D_nf=0\right\rbrace  $$    
\end{definition}
 Let $\beta=(i_1,...,i_m)$ be a multi-index, where $i_j\in\{1,...,n\} \forall j$, and define $\p^\beta f(x)=\p_{i_1}\cdots \p_{i_m}f(x)$.
\begin{definition} Let $f$ as before and $m$ a positive integer. We define the $m$-th gradient $\nabla^mf$ as the set of all derivatives $\p^\beta f(x)$ for all $\beta$ with lenght $m$ and set $$ u=|\nabla^m f|^2=\sum_{|\beta|=m} |\p^\beta f|^2$$ 
\end{definition}
We want to find precisely for which $\alpha$ we have that $u^{\frac \alpha 2}$ is subharmonic.
We follow the same proof of the preceding section. In particular if $f_m, f_{m+1}$ are monogenic homogeneous polynomials of degree $m$ and $m+1$ we look for the maximum $M$ :
\begin{equation}\label{17}M:=\max_{\substack{f_m\in\M^m(\R^n,\R_n) \\ f_{m+1}\in\M^{m+1}(\R^n,\R_n)} } \frac{\sum_{i=1}^n \left(\int_{\B_1} (\p_i f_{m+1},f_m)(x) d\lambda_4\right)^2 }{\left( \int_{\B_1} |f_m(x)|^2d\lambda_4\right)\left(\sum_{i=1}^n \int_{\B_1} |\p_i f_{m+1}(x)|^2 d\lambda_4 \right)}.
\end{equation}
It is similar, as in the previous paragraph, to see that such maximum is equivalent to
\begin{equation}\label{18} M=\max_{\substack{ \sum_{i=1}^n \zeta_i^2=1 \\ f_{m+1}\in \mathcal{M}(\R^n,\R_n)\\ \sum_{i=1}^n \|\p_i f_{m+1}\|^2=1}} \left\{\sum_{h,k=1,..,n}\zeta_h\zeta_k (\p_h f_{m+1},\p_k f_{m+1})\right\}.
\end{equation}
  For every fixed $f_{m+1}$ in \eqref{18} the maximum is reached for some $\zeta \in \R^n$. Up to the choice of an $s\in Spin (n)$ we can assume, by considering $sf_{m+1}(s^{-1}xs)$ that $\zeta =(0,...,1)$ or, in other words, that $\p_\zeta =\p_n$ (see \cite{DSS92} section 1.12.2).
  As was shown in \eqref{10bis} we have that
  $$ \sum_{i=1}^n \|\p_i f\|^2=(m+1)(2m+n+2)\|f\|^2 $$
  and 
  \begin{equation}\label{24}  M= \max_{ f_{m+1} \in \mathcal{M}^{m+1}(\R^n,\R_n)} \left\{ \frac 1{(2m+n+2)(m+1)} \frac{\|\p_1 f_{m+1}\|^2}{\| f_{m+1}\|^2}  \right\} .
  \end{equation}
In order to compute $M$ we need to find an orthogonal decomposition of $\mathcal{M}^{m+1}(\R^n,\R_n)$ similar to that in Remark \ref{r1} (see \eqref{pd2}). To this end we exploit again the Fischer decomposition theorem. We consider the splitting $\R^n= \R^{n-1}\oplus \R\e_n$ and identify in this manner $\R^{n-1}$ inside $\R^n$. With this identification we take $\M^k(\R^{n-1},\R_n)$ to be the space of monogenic homogeneous polynomials in $x_1,....,x_{n-1}$ of degree $k$ with values in $\R_n$. Moreover we shall indicate with $x=x_1\e_1+...+x_n\e_n$ a vector in $\R^n$ and with $\ux =x_1\e_1+...+x_{n-1}\e_{n-1}$ a vector in $\R^{n-1}$. 
\begin{theorem}[Fischer decomposition in $\R^n$, \cite{DSS92} ] We have the following decomposition 
$$ \mathcal{P}^k(\R^{n-1}, \R_n)=\bigoplus_{j=0}^k \ux^j \M^{k-j}(\R^{n-1},\R_n) $$
\end{theorem}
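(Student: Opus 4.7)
My plan is to follow the classical Fischer duality argument, which is the same idea used for the $\R^3$ version (Theorem \ref{t2}) and is essentially what appears in \cite{DSS92}. The strategy is to introduce a suitable inner product on $\mathcal{P}^k(\R^{n-1},\R_n)$ under which left multiplication by $\ux$ is the formal adjoint, up to sign, of the Dirac operator $D_{n-1}:=\e_1\partial_1+\dots+\e_{n-1}\partial_{n-1}$; then the decomposition follows by repeatedly splitting off the orthogonal complement of $\M^{k-j}$ inside $\mathcal{P}^{k-j}$.

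First I would equip $\mathcal{P}^k(\R^{n-1},\R_n)$ with the Fischer pairing
\[
\langle P,Q\rangle_F:=\bigl[\,\overline{P(\partial)}\, Q\,\bigr]_{\emptyset}(0),
\]
where $P(\partial)$ means the differential operator obtained from $P(\ux)$ by replacing each $x_i$ with $\partial_i$, and $[\,\cdot\,]_\emptyset$ denotes the scalar part. This is a positive-definite real inner product on $\mathcal{P}^k(\R^{n-1},\R_n)$ whose restriction to each homogeneous component is equivalent to the $L^2(\B^{n-1})$ inner product used in Section \ref{s3}, so any orthogonality statement proved for $\langle\cdot,\cdot\rangle_F$ transfers to the setup we need.

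Next, the key computation is the adjointness identity
\[
\langle \ux\, P,\, Q\rangle_F\;=\;-\langle P,\, D_{n-1}\, Q\rangle_F
\qquad\text{for }P\in\mathcal{P}^{k-1},\ Q\in\mathcal{P}^{k},
\]
which one verifies by checking it on monomial generators $\ux^\alpha\e_A$ and using the Clifford relations $\e_i\e_j+\e_j\e_i=-2\delta_{ij}$; the sign comes from the conjugation $\overline{\ux}=-\ux$. Once this identity is in hand, the orthogonal complement of the subspace $\ux\cdot\mathcal{P}^{k-1}(\R^{n-1},\R_n)$ inside $\mathcal{P}^k(\R^{n-1},\R_n)$ consists exactly of those $Q$ with $\langle P, D_{n-1} Q\rangle_F=0$ for every $P\in\mathcal{P}^{k-1}$, i.e. $D_{n-1}Q=0$. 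Hence
\[
\mathcal{P}^k(\R^{n-1},\R_n)\;=\;\ux\cdot\mathcal{P}^{k-1}(\R^{n-1},\R_n)\;\oplus\;\M^k(\R^{n-1},\R_n).
\]
Finally I would iterate this one-step decomposition: apply it to $\mathcal{P}^{k-1}$, substitute, and continue until reaching $\mathcal{P}^0=\M^0$. Collecting the pieces yields $\mathcal{P}^k=\bigoplus_{j=0}^{k}\ux^j\M^{k-j}$, and the sum is direct because each of the one-step splittings is orthogonal.

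The main technical obstacle is the adjointness identity, which requires some care because the Clifford algebra is non-commutative: one must fix once and for all whether $\ux$ acts on the left or the right on the $\R_n$-valued polynomial, and then keep the corresponding order straight when computing $\overline{(\ux P)(\partial)}$ versus $\overline{P(\partial)}\,D_{n-1}$. Beyond that, the injectivity of $\ux\cdot(\,\cdot\,):\mathcal{P}^{k-1}\to\mathcal{P}^k$ (needed to conclude that the sum is a direct sum in the algebraic sense, not only orthogonal) follows from the fact that $\ux$ is invertible off $\{0\}$ with $\ux^{-1}=-\ux/|\ux|^2$, so $\ux P\equiv 0$ forces $P\equiv 0$.
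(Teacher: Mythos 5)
Your approach is correct and, while the paper cites \cite{DSS92} rather than proving this Clifford statement directly, it does prove the parallel octonion Fischer decomposition in Section 4 by exactly the argument you lay out: introduce the Fischer pairing, show that multiplication by the (conjugated) vector variable is the adjoint of the Dirac/Weyl operator, split $\mathcal{P}^k=\mathcal{M}^k\oplus \ux\,\mathcal{P}^{k-1}$, and iterate. The injectivity observation (that left multiplication by $\ux$ is injective, which keeps the iterated sum direct) is correct and is implicitly needed in the paper's octonion proof as well. So your proof is a sound rendering of what the reference and the paper's own analogous argument do.

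One inaccuracy worth flagging: the claim that the Fischer pairing restricted to a homogeneous component is ``equivalent'' to the $L^2(\B^{n-1})$ inner product, so that Fischer orthogonality automatically transfers to $L^2$ orthogonality, is false in general. For instance $x_1^2$ and $x_2^2$ are Fischer-orthogonal in $\mathcal{P}^2$ (since $\partial_1^2 x_2^2=0$) but are not $L^2(\B)$-orthogonal. This does not affect the theorem you are asked to prove, which asserts only the algebraic direct sum. However, the paper's remark immediately after the theorem that the decomposition is also orthogonal in $L^2(\B_{n-1})$ does require a separate argument; for the $\H/\R^3$ case the paper supplies it in Proposition \ref{p2} and Corollary \ref{pc1} via integration on spherical shells and the divergence theorem. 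You cannot obtain that $L^2$ orthogonality from the equivalence you invoke, so if you intend to cover that remark as well, you would need to argue it along those lines.
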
 
We note that this decomposition is orthogonal also in $L^2(\B_{n-1})$.
As we did in the previous section starting from $f_{m+1}\in\mathcal M^{m+1}(\R^n,\R_n)$ and using the Fischer decomposition to its restriction $f_{m+1}|_{\R^{n-1}}$, we can first decompose $f_{m+1}(\ux)=\sum_{s=0}^{m+1}\ux^s g_{m+1-s}(\ux)$ where $g_{m+1-s}\in\mathcal M^{m+1-s}(\R^{n-1},\R_n)$ and then we can regain $f_{m+1}(x)$ considering its monogenic extension using the Cauchy-Kowalevski operator $\exp(x_n \e_n D_{n-1})$: 
\begin{equation}\label{pd2}
f_{m+1}(x)=\sum_{s=0}^{m+1}\widetilde {\ux^s g_{m+1-s}}(x)
\end{equation}
where $\widetilde {\ux^s g_{m+1-s}}(x)=\exp(x_n \e_n D_{n-1}) (\ux^sg(\ux))$.
\begin{proof}[Proof of Theorem \ref{t3}]
The proof goes as the one of Theorem \ref{t1}, we only need to adapt Proposition \ref{p3}. 
We consider elements of the type $\ux^s f(\ux)$ where $f$ is a monogenic homogeneous polynomial of degree $k$ in $\R^{n-1}$ (as in the decomposition \eqref{pd2}). It follows that
\begin{equation} \label{19} 
\begin{split}
\widetilde{\ux^s f(\ux)}=& \sum_{i=0}^s \left( c_{s,i,n}x^i_n \ux^{s-i}\e_n^i\right) f(\ux) \\
=& |x|^s \left( A(\frac{x_n}{|x|})+B(\frac{x_n}{|x|} )\frac {(\ux)}{|x|} \e_n \right) f(\ux)
\end{split}
\end{equation}
where $c_{s,i,n}$ are some convenient real constants. We look for an explicit formula for $A$ and $B$ and since $D_n\widetilde{\ux^s f(\ux)} =0$, following \cite{DSS92}, we have that 
$$ \widetilde{\ux^s f(\ux)}=d_{n,k,s}|x|^s\left( C^{\frac n2 +k-1}_{s} (\frac {x_n}{|x|}) +\frac{n +2k-2}{n+2k+s-2} C^{\frac{n}2 +k}_{s-1}(\frac {x_n}{|x|}) \frac{\ux}{|x|}\e_n \right) f(\ux)$$
 where $d_{n,k,s}$ is a constant which depends only on $n,k$ and $s$ (see also \cite{L12}).
We compute next $\|  \widetilde{\ux^s f(\ux)}\|^2$ and we have

\begin{equation}\label{20}
\begin{split}
{}&\frac{\|\widetilde{\ux^s f(\ux)}\|^2}{d_{n,k,s}^2}= \int_{\B_n} r^{2s}\left( (C_{s}^{\frac n2+ k-1}(\frac {x_n} {|x|}))^2 +(\frac {|\ux |}{|x|} \frac{n+2k-2}{n+2k+s-2}C^{\frac n2 +k}_{s-1}(\frac {x_n} {|x|}))^2 \right)\\&\times |f(\ux)|^2 d\lambda_n   \\
={}& \int_0^1 \left ( \int_{\mathbb{S}^{n-1}} \left( (C_{s}^{\frac n2 +k-1}( {x_n} ))^2 +(1-x_n^2)( \frac{n+2k-2}{n+2k+s-2}C^{\frac n2 +k}_{s-1}(x_n))^2 \right)|f(\ux)|^2d\Sigma \right) \\ &{}\times r^{n+2s+2k-1} dr \\
={}& \frac{1}{n+2s+2k}\int_{-1}^{1} \left( 
\int_{{\sqrt{1-x_0^2}\mathbb{S}^{n-2}}}
  \left( (C_{s}^{\frac n2 +k-1}( {x_n} ))^2 {}+(1-x_n^2)( \frac{n+2k-2}{n+2k+s-2} C^{\frac n2 +k}_{s-1}(x_n))^2 \right) \right.\\{}&\left.  \times |f(\ux)|^2\frac{d\Sigma_{n-2}}{\sqrt{1-x_n^2}} \right)dx_n
\\ ={}&
\frac{1}{n+2s+2k}\int^1_{-1} (1-x_n^2)^{\frac n2 +k -\frac 32}
\left( (C_{s}^{\frac n2 +k-1}( {x_n} ))^2 +(1-x_n^2)( \frac{n+2k-2}{n+2k+s-2}C^{\frac n2 +k}_{s-1}(x_n))^2 \right)dx_n \\  &{} \times\int_{\mathbb{S}^{n-2}}|f(\ux)|^2 d\Sigma_2 .
\end{split}
\end{equation}
by \refeq{21} we have 
\begin{equation}\label{22}
\frac{\|\widetilde{\ux^s f(\ux)}\|^2}{d_{n,k,s}^2}=\frac{ 2^{4-n-2k} \Gamma(n+2k+s-2)\pi}{s!(n+2k+s-2)(n+2k+2s)\Gamma(\frac n2 +k-1)^2 }\int_{\mathbb{S}^{n-2}}|f(\ux)|^2 d\Sigma_{n-2}
\end{equation}
Similarly for $ \partial_n \widetilde{\ux^s f(\ux)}$:
\begin{equation}
\begin{aligned} \frac{\partial_n \widetilde{\ux^s f(\ux)}}{d_{n,k,s}}&=|x|^{s-1}\left( sC^{\frac{n}{2} +k-1}_{s}(\frac{x_n}{|x|})\frac{x_n}{|x|} +C^{\prime\frac{n}{2} +k-1}_{s}(\frac{x_n}{|x|}) (1-(\frac{x_n}{|x|})^2) \right. \\ &\phantom{=} + { } 
(\frac{n+2k-2}{n+2k+s-2})((s-1)C^{\frac{n}{2} +k}_{s-1}(\frac{x_n}{|x|})\\ &\phantom{=} { }+ \left.
C^{\frac{n}{2} +k \prime}_{s-1}(\frac{x_n}{|x|})(1-(\frac{x_n}{|x|})^2) 
) \frac{\ux}{|x|} \e_n \right) f(\ux) \end{aligned}
\end{equation}
and by equation \refeq{xx} we have
\begin{equation}
\frac{\partial_n \widetilde{\ux^s f(\ux)}}{d_{n,k,s}}= (n+2k+s-3)|x|^{s-1}\left( 
C^{\frac{n}{2} +k-1}_{s-1}(\frac{x_n}{|x|}) +\frac{n+2k-2}{n+2k+s-3}C^{\frac{n}{2} +k}_{s-2}(\frac{x_n}{|x|}) \frac{\ux}{|x|} \e_n \right)f(\ux).
\end{equation} 
It follows by applying the same computation as in \refeq{22} with $s-1$ in place of $s$ that
\begin{equation}\label{23}
\begin{split}
\frac{\|\partial_n \widetilde{\ux^s f(\ux)}\|^2}{d_{n,k,s}^2}&=
\left(\frac{ 2^{4-n-2k} \Gamma(n+2k+s-3) (n+2k+s-3)\pi}{(s-1)!(n+2k+2s-2)\Gamma(\frac n2 +k-1)^2 }\right) \times \\ 
&\phantom{=} {} \times \int_{\mathbb{S}^{n-2}}|f(\ux)|^2 d\Sigma_{n-2}.
\end{split}
\end{equation}
 Taking the quotient between \refeq{22} and \refeq{23} we have
 \begin{equation}
 \frac{\|\partial_n \widetilde{\ux^s f(\ux)}\|^2 }{\| \widetilde{\ux^s f(\ux)}\|^2}= \frac{s(n+2s+2k)(n+2k+s-2)}{n+2k+2s-2}.
 \end{equation}
 Since $k+s=m+1$, the maximum $M$ is reached for $s=m+1$ and by plugging into \refeq{24} we have 
 \begin{equation}
 M= \frac{n+m-1}{n+2m}. 
 \end{equation}  
 Finally we have that $|\nabla^m f|^\alpha$ is subharmonic for $\alpha\ge \frac {2M-1}M =\frac {n-2}{n+m-1}$.
\end{proof}
\section{Monogenic functions on the octonions}
We consider the case of the octonions $\O$.
This algebra is built by the well known Cayley-Dickson construction. Starting from $\H$ we consider on $\H^2$ the following binary operation
$ \cdot :\H^2\times \H^2 \rightarrow \H^2$
\begin{equation}\label{25}
(a,b)\cdot (c,d) =(ac-d\bar{b},cb+\bar{a}d )
\end{equation}
We define $\O$ to be $\H^2$ equipped with the usual sum and with the product $\cdot$ defined in \eqref{25}.
It turns out that $\O$ with the norm inherited from $\H^2$  is a composition algebra which means
$$ | (a,b)\cdot (c,d)|^2 =| (a,b)|^2 |(c,d)|^2 .$$
The dimension over $\R$ of $\O$ is $8$ and a basis is given by
$\e_0=(1,0),\e_1=(\i,0),...,\e_7=(0,\k)$. We note that $\e_j^2 =-1$ for $j>0$, $\e_0^2=1$ and $\e_i\e_j=-\e_j\e_i$ for $i\neq j$, $i>0$ and $j>0$. Clearly $\e_0$ is the unity of $\O$ and for this reason it is also denoted by $1$.
We have that $\O$ splits naturally in two subspaces called the real and the imaginary part (here identified with $\R^7$)
$$ \O \simeq \R \e_0\oplus \underbrace{\R\e_1\oplus...\oplus\R\e_7}_{\cong\R^7} .$$
For every $x\in\O$ we write
$$ x= x_0\e_0 +\sum_{i=1}^{7}x_i\e_i=x_0 +(\ux) $$
$x_0$ is called the real part of $x$ and $\ux$ the imaginary part of $x$. 
We define the conjugate of an octonion $x$  as $\bar{x}=x_0-\ux$. It is easy to check that $\overline{xy}=\bar{y}\bar{x}$ and that the bilinear map $\langle x,y\rangle := (\bar{y}x)_0$ defines a real scalar product on $\O$ and such that $\langle x,x\rangle =|x|^2$. 
We define the Weyl operator
$$\bar\partial =\sum_{i=0}^{7} \e_i\partial_i $$
and the Dirac operator
$$D=\p_{\ux}=\sum_{i=1}^{7}\e_i\partial_i .$$  
\begin{remark}\label{27}
The product defined in \ref{25} is not associative but the following hold:
\begin{align}\label{26}
&x(ax)=(xa)x, \ (xx)a=x(xa),\forall x,a\in \O \\
&\bar{x}(xa)=|x|^2 a
\end{align}

in particular it makes sense to consider the powers in $\O$. Moreover the subalgebra generated by two octonions is associative. (see \cite{CS06} page 76) 
\end{remark}
We have 
$$\partial\dib =\dib\di =\Delta $$ 
and $$ D^2= -\Delta_{x_1,...,x_7} .$$
\begin{definition}
A $C^1$ function $f:\Omega\rightarrow \O$, where $\Omega$ is an open subset of $\O$ (or $\R^7$) is monogenic if
$$ \dib f=0 \ (\text{ resp. }Df=0) $$ 
\end{definition}
We introduce for a positive integer $k$ the space of homogeneous polynomials of degree $k$ on $\O$ (or $\R^7$) $\P^k(\O,\O)$ (resp. $\P^k(\R^7,\O)$) and the corresponding space of monogenic polynomials $\M^k(\O,\O)$ (resp. $\M(\R^7,\O)$). Similarly for a multi-index $\beta\in \left\lbrace 0,...,7\right\rbrace ^k$ we introduce $\p^\beta f=\p_{\beta_1}...\p_{\beta_k} f$ and define $|\nabla^k f(x) |^2 :=\sum_{|\beta |=k} |\p^\beta f(x)|^2 $. In order to prove that $|\nabla^m f|^\alpha$ is subharmonic we exploit again Lemma \ref{l1} and the following
\begin{proposition}\label{p4}
	Let $f:\Omega\rightarrow \O$ be a monogenic function and let $u(x)=\sum_{|\beta|=m} |\p^\beta f(x)|^2$. Then we have
\begin{equation}
\frac{|\nabla u|^2}{2 u\Delta u} \le \frac{m+7}{2(m+4)}
\end{equation}
\end{proposition}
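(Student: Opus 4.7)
The plan is to reproduce the quaternionic argument of Proposition \ref{p1} step by step, paying attention to the non-associativity of $\O$ at each point. First I would run the opening reduction of Section \ref{s2}: each real component of a monogenic $f$ is harmonic (from $\dib\p=\Delta$, which holds on $\O$ thanks to Remark \ref{27}), so Lemma \ref{l1} applies with the harmonic components of the derivatives $\p^\beta f$ as its $U_j$. Translating to $0$ and expanding $f=\sum_k f_k$ with $f_k\in\M^k(\O,\O)$, only $f_m$ and $f_{m+1}$ contribute, and the componentwise analogue of Lemma \ref{lemma1} rewrites the ratio under study as
\begin{equation*}
\frac{|\nabla u(0)|^2}{2u(0)\Delta u(0)}=\frac{\sum_{i=0}^{7}\bigl(\int_{\B_1}(\p_i f_{m+1},f_m)\,d\lambda_8\bigr)^2}{\bigl(\int_{\B_1}|f_m|^2\,d\lambda_8\bigr)\bigl(\sum_{i=0}^{7}\int_{\B_1}|\p_i f_{m+1}|^2\,d\lambda_8\bigr)}.
\end{equation*}

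Next I would replay the Lagrange-multiplier optimization of Lemma \ref{lemma2}: for $f_{m+1}$ fixed, the optimizing $f_m$ must be proportional to a directional derivative $\p_X f_{m+1}$, so the inner maximum equals the largest eigenvalue of the Gram matrix $a_{hk}=\int_{\B_1}(\p_h f_{m+1},\p_k f_{m+1})\,d\lambda_8$. A rotation of $\R^8=\O$ by a unit octonion, chosen via the Moufang and alternative identities of Remark \ref{27} so that $\dib$-regularity and $L^2$-norms are preserved, allows me to assume $X=\e_0$, and combined with the degree identity $\sum_{i=0}^{7}\|\p_i f_{m+1}\|^2=2(m+1)(m+5)\|f_{m+1}\|^2$ (proved as \eqref{10bis} with dimension $8$) this gives
\begin{equation*}
M=\max_{f_{m+1}\in\M^{m+1}(\O,\O)}\frac{1}{2(m+1)(m+5)}\,\frac{\|\p_0 f_{m+1}\|^2}{\|f_{m+1}\|^2}.
\end{equation*}
Then I would restrict $f_{m+1}$ to the imaginary hyperplane $\R^7=\{x_0=0\}$, apply the Fischer decomposition $\P^{m+1}(\R^7,\O)=\bigoplus_{s=0}^{m+1}\ux^s\M^{m+1-s}(\R^7,\O)$ (the products $\ux^sg$ are unambiguous because $\ux$ generates an associative subalgebra of $\O$), and recover $f_{m+1}$ on $\O$ via the Cauchy-Kowalevski extension $\widetilde{\,\cdot\,}=\exp(-x_0 D)$. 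The orthogonality of the pieces $\widetilde{\ux^s g_{m+1-s}}$ and of their $\p_0$-derivatives transfers verbatim from Proposition \ref{p2} and Corollary \ref{pc1}, since those proofs use only $Df=Dg=0$, the divergence theorem and the real inner product, reducing the optimization to
\begin{equation*}
M=\frac{1}{2(m+1)(m+5)}\max_{0\le s\le m+1}\frac{\|\p_0\widetilde{\ux^s g_{m+1-s}}\|^2}{\|\widetilde{\ux^s g_{m+1-s}}\|^2}.
\end{equation*}

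Finally I would carry out the Gegenbauer-polynomial computation of Proposition \ref{p3}, now in dimension $8$ with imaginary part of dimension $7$. Imposing $\dib\widetilde{\ux^s g}=0$ forces a representation of $\widetilde{\ux^s g}$ in terms of $C_s^{7/2+k-1}$ and $C_{s-1}^{7/2+k}$, and the slicing and norm integration used to derive \eqref{22}--\eqref{23}, together with the Gegenbauer contraction \eqref{xx}, produce
\begin{equation*}
\frac{\|\p_0\widetilde{\ux^s g}\|^2}{\|\widetilde{\ux^s g}\|^2}=\frac{s(s+k+4)(s+2k+6)}{s+k+3},\quad k+s=m+1.
\end{equation*}
This expression is monotone increasing in $s$, so its maximum is attained at $s=m+1,\,k=0$, where it equals $(m+1)(m+5)(m+7)/(m+4)$; dividing by $2(m+1)(m+5)$ yields $M\le\tfrac{m+7}{2(m+4)}$. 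The main obstacle I anticipate is precisely the non-associativity of $\O$: the factorization $\dib\p=\Delta$, the computation of $D(\ux^s g)$, the exponential in the Cauchy-Kowalevski operator, and the orthogonality of the $\widetilde{\ux^s g}$ all involve products of three or more octonions and must be justified using the alternative and Moufang identities recalled in Remark \ref{27}. Once these checks are in place, the remaining calculations are faithful translations of the quaternionic and Clifford-algebra arguments already developed in Sections \ref{s2} and \ref{s3}.
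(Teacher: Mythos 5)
Your plan reproduces the paper's own proof essentially step by step: reduce to the Taylor coefficients $f_m,f_{m+1}$, optimize over $f_m$ to turn the ratio into an eigenvalue problem for the Gram matrix of $\p_i f_{m+1}$, rotate by a unit octonion so the maximizing direction is $\p_0$, use the identity $\sum_i\|\p_i f_{m+1}\|^2=2(m+1)(m+5)\|f_{m+1}\|^2$, then Fischer-decompose on $\R^7$, extend by Cauchy--Kowalevski, verify orthogonality via the real inner product (which is exactly where the paper invokes Remark \ref{pr2} to dodge associativity), and finish with the Gegenbauer computation from Section \ref{s3} specialized to $n=8$ — all of which matches. One small slip: the Gegenbauer exponents should be $k+3$ and $k+4$ (i.e.\ $\tfrac{n}{2}+k-1$ and $\tfrac{n}{2}+k$ with $n=8$), not $\tfrac72+k-1$ and $\tfrac72+k$; your final ratio $\frac{s(s+k+4)(s+2k+6)}{s+k+3}$ is nevertheless the correct one for $n=8$ and yields $M=\frac{m+7}{2(m+4)}$ as required.
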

Before getting to the proof of Proposition \ref{p4} we need some preliminaries:
\begin{theorem}
	The following decompositions hold:
	$$ \P^k(\O,\O)=\bigoplus^{k}_{j=0} \bar{x}^j\M^{k-j}(\O,\O) $$
	and similarly
	$$ \P^k(\R^7,\O)=\bigoplus_{j=0}^{k} \ux^j\M^{k-j}(\R^7,\O)$$
\end{theorem}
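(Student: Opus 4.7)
The plan is to prove both decompositions in parallel. In each case it suffices to establish the single splitting
$$\P^k(V,\O) = \M^k(V,\O) \;\oplus\; x^{*}\cdot\P^{k-1}(V,\O),$$
where $(V,x^{*},\text{Dirac}) = (\O,\bar x,\dib)$ or $(\R^7,\ux,D)$, since iterating on $k$ immediately yields the desired direct-sum expansion. To obtain this splitting I would equip $\P^k$ with the real Fischer inner product
$$[p,q]_F := \sum_\alpha \alpha!\,\langle a_\alpha,b_\alpha\rangle,\qquad p=\sum_\alpha a_\alpha x^\alpha,\ q=\sum_\alpha b_\alpha x^\alpha,$$
where $\langle\cdot,\cdot\rangle$ is the $\O$-scalar product. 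This pairing is positive-definite, so every subspace has an orthogonal complement.

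The heart of the argument is the adjointness
$$[\bar x\cdot p,q]_F = [p,\dib q]_F,\qquad [\ux\cdot p,q]_F = -[p,Dq]_F,$$
where $x^{*}\cdot p$ denotes pointwise left octonion multiplication. Writing out the shifted coefficients on both sides, everything reduces to the single identity $\langle \e_i a,b\rangle = -\langle a,\e_i b\rangle$ for $i>0$ (together with $\langle\e_0 a,b\rangle=\langle a,\e_0 b\rangle$). In the non-associative setting I would prove this by expanding $\langle \e_i a,b\rangle = \text{Re}(\bar b(\e_i a))$ and then using two properties of the trace form that survive non-associativity: cyclicity $\text{Re}(xy)=\text{Re}(yx)$ and real-associativity $\text{Re}((xy)z)=\text{Re}(x(yz))$ (the octonion associator has vanishing real part). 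Sliding $\e_i$ across the triple product via these two identities gives $\text{Re}(a(\bar b\e_i)) = \langle a,\overline{\bar b\e_i}\rangle = -\langle a,\e_i b\rangle$.

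Granted the adjointness, a homogeneous polynomial $p$ of degree $k$ is orthogonal to the entire image $x^{*}\cdot\P^{k-1}$ if and only if $\dib p=0$ (resp.\ $Dp=0$), i.e.\ iff $p\in\M^k$. Hence $\P^k = \M^k \oplus x^{*}\cdot\P^{k-1}$ as an orthogonal decomposition, and induction on $k$ produces
$$\P^k(V,\O) = \bigoplus_{j=0}^{k} (x^{*})^{j}\,\M^{k-j}(V,\O).$$
At the end one checks that the iterated left multiplications $L_{x^{*}}^{j}m$ agree with multiplication by the octonion power $(x^{*})^{j}$ on $m$; this follows pointwise from Artin's theorem (the subalgebra generated by two octonions is associative), applied to the pair $x^{*}$ and $m(x)$.

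The main obstacle is the adjointness step. In the Clifford case it is a brief calculation using associativity and the anticommutation of the generators, but in $\O$ one has to invoke the real-associativity of the trace form, and be careful that $\bar x\cdot p$ is interpreted as left multiplication rather than an algebraic product that would be ambiguous. Once that identity is in hand, the rest of the argument is structurally identical to the Clifford version of \cite{DSS92}.
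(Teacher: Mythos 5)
Your proposal is correct and essentially identical to the paper's argument: both introduce the Fischer inner product on $\P^k$, use the vanishing of the real part of the octonion associator (real-associativity of the trace form) to prove the adjointness of $\bar{x}\cdot$ (resp. $\ux\cdot$) against $\dib$ (resp. $D$), conclude the one-step splitting $\P^k = \M^k \oplus x^*\P^{k-1}$, and iterate. Your closing remark that iterated left multiplications reproduce genuine octonion powers via Artin's theorem is a point the paper handles in an earlier remark rather than in the proof itself, but it is the same observation.
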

\begin{proof}
	We introduce on $\P^k (\O,\O)$ a scalar product. For $R_i(x)=\sum_{|\alpha| =k} a^i_\alpha x^\alpha$ for $i=1,2$ (here $\alpha=(\alpha_0,...,\alpha_7)\in \N^8$ is a multiindex, $ |\alpha|=\alpha_0+...+\alpha_7$ and $ x^\alpha =x_0^{\alpha_0}...x_7^{\alpha_7}$) we define
		\begin{align}
	 \left\langle R_1(x),R_2(x)\right\rangle &:=
	 \left( \left( \sum_{|\alpha |=k} \bar{a}_\alpha^1 \p^\alpha \right) \left( \sum_{|\alpha| =k} a^2_\alpha x^\alpha\right) 
	\right)_0 \\& = \left( \sum_{|\alpha|=k}   \bar{a}^1_\alpha a^2_\alpha \alpha!\right)_0  
	 \end{align}
Since $\left(  a(bc)\right)_0 =\left(  (ab)c\right)_0 $ for all $a,b,c\in \O$ we have that for $R_1\in \P^{k-1}(\O,\O)$
\begin{equation*}
\left\langle \bar{x} R_1(x),R_2(x)\right\rangle =\left\langle R_1(x),\dib R_2(x)\right\rangle 
\end{equation*} 
from which follows $\M^k(\O,\O) \subset \left\lbrace \bar{x}\P^{k-1}(\O,\O)\right\rbrace ^\perp$. For the opposite inclusion suppose that $\dib R_2 \neq 0$ and choose $R_1=\dib R_2$. Then clearly $\left\langle \bar{x}R_1,R_2\right\rangle =\left\langle  R_1,\dib R_2\right\rangle \neq 0$ hence $R_2 \notin \left\lbrace \bar{x}\P^{k-1}(\O,\O)\right\rbrace ^\perp$. We have the splitting 
\begin{equation}
\P^k =\M^k\oplus \bar{x}\P^{k-1}
\end{equation} 
and by repeating the same argument on $\P^{k-1}$ we have the conclusion.
The proof of the second decomposition is similar, with $\dib$ replaced by $D$  
\end{proof}
As we did in the first and second section we need to find an orthogonal decomposition of $\M^{m+1}(\O,\O)$ similar to that described in Remark \ref{r1}. Starting from a polynomial $f\in\P^k(\R^7,\O)$ the monogenic extension to $\O$ is given by the Cauchy-Kowalevski extension operator :
$$ \tilde{f}(x)=\sum_{i=0}^{\infty} (-1)^i \frac{x_0^i}{i!} D^i f(\ux) .$$
 Combining the Fischer decomposition and the Cauchy-Kowalevski extension we can decompose any $f_{m+1}\in\mathcal M^{m+1}(\O,\O)$ in the same way as we did in the previous sections
 $$ f_{m+1}(x)=\sum_{s=0}^{m+1}\widetilde{\ux^s g_{m+1-s}}(x). $$
We observe that the orthogonality of this decomposition and of its $\partial_{x_0}$-derivative can be proved as we did in the Proposition \ref{p2} and Corollary \ref{pc1} with foresight to use the real scalar product instead of the hermitian one (see Remark \ref{pr2}). This is because we need the associativity when we compute explicitly 
$$\int_{\mathbb B^8}(\widetilde{\ux^ng(\ux)},\widetilde{\ux^mf(\ux)})_0\, dV$$.  

We need to compute $D (\ux^s f(\ux))$ with $f\in\M^k(\R^7,\O)$. For this we need the following lemmas
\begin{lemma}\label{28}
	Let $f\in\M^k(\R^7,\O)$ then
	$$ D (\ux f(\ux)) =-(2k+7)f(\ux).$$ 
\end{lemma}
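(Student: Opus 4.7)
The plan is to prove the identity by direct computation using the Leibniz rule together with the alternative-algebra identities that replace associativity in $\O$. Since $\ux = \sum_{j=1}^{7} x_j \e_j$ is a purely imaginary octonion and $D = \sum_{i=1}^7 \e_i \partial_i$, expanding gives
\[
D(\ux f(\ux)) \;=\; \sum_{i=1}^{7} \e_i \bigl(\e_i\, f(\ux)\bigr) \;+\; \sum_{i=1}^{7} \e_i\bigl(\ux\,\partial_i f(\ux)\bigr).
\]
Call these $S_1$ and $S_2$. For $S_1$, the left alternative law $\e_i(\e_i g) = \e_i^{\,2} g = -g$ gives $S_1 = -7 f(\ux)$, which accounts for the ``$7$'' in the formula.

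For $S_2$, write $\ux=\sum_j x_j\e_j$ so that $S_2 = \sum_{i,j} x_j\, \e_i(\e_j \partial_i f)$, and split into the diagonal $i=j$ and the off-diagonal $i\neq j$ pieces. The diagonal piece is $\sum_i x_i\, \e_i(\e_i \partial_i f) = -\sum_i x_i \partial_i f = -k f$ by alternativity and Euler's relation for $f$ homogeneous of degree $k$. For the off-diagonal piece I use the linearized left alternative identity $\e_i(\e_j g) + \e_j(\e_i g) = (\e_i\e_j + \e_j\e_i)g$, which vanishes for $i\neq j$; thus $\e_i(\e_j g) = -\e_j(\e_i g)$ when $i \ne j$, and summing gives
\[
\sum_{i\neq j} x_j\, \e_i(\e_j \partial_i f) \;=\; -\sum_{j} x_j\, \e_j\!\cdot\!\Bigl(\sum_{i\neq j}\e_i \partial_i f\Bigr) \;=\; -\sum_j x_j\, \e_j\!\cdot\!\bigl(Df - \e_j\partial_j f\bigr).
\]
Since $f$ is monogenic, $Df=0$, and again by alternativity $\e_j(\e_j \partial_j f) = -\partial_j f$. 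The right-hand side collapses to $-\sum_j x_j\,\partial_j f = -kf$. Adding diagonal and off-diagonal contributions, $S_2 = -2k f$, and altogether $D(\ux f) = -7f - 2kf = -(2k+7)f$, as desired.

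The main obstacle is precisely what one would expect in $\O$: since multiplication is non-associative, I cannot freely pull $\e_j$ through the product $\e_j\partial_j f$, nor distribute $\ux$ across $\sum_i \e_i\partial_i f$ to produce $\ux\cdot Df$. The key is that only the \emph{left} alternative identity $\e_i(\e_i g)=(\e_i\e_i)g$ and its polarization $\e_i(\e_j g)+\e_j(\e_i g)=(\e_i\e_j+\e_j\e_i)g$ are ever needed, both of which hold in any alternative algebra (Remark \ref{27}). Thus the computation can be reorganized so that every three-factor associativity we invoke involves only the repeated element $\e_j$, which is exactly the case covered by alternativity.
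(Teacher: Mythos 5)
Your proof is correct, and it takes a genuinely different route from the paper's. The paper evaluates $\sum_{i=1}^7 \e_i(\ux\,\partial_i f)$ indirectly: it pairs this expression against an arbitrary test octonion $t$, applies the braid identity $\langle xy,z\rangle = \langle y,\bar x z\rangle$ and the exchange identity $\langle xy,uv\rangle + \langle xv,uy\rangle = 2\langle x,u\rangle\langle y,v\rangle$ (both special to composition algebras, cited from \cite{CS06}), recognizes $\sum_i x_i\partial_i f = kf$ and $\langle \ux t, Df\rangle = 0$, and then concludes by nondegeneracy of the scalar product. You instead compute $\sum_{i,j} x_j\,\e_i(\e_j\partial_i f)$ directly, splitting into diagonal terms (handled by the left alternative law $\e_i(\e_i g)=-g$ plus Euler's relation) and off-diagonal terms (handled by the polarized left alternative law $\e_i(\e_j g) + \e_j(\e_i g)=(\e_i\e_j+\e_j\e_i)g$, which vanishes for $i\neq j$, allowing you to reassemble $Df=0$ and $\e_j(\e_j\partial_j f)=-\partial_j f$). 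Your argument uses only alternativity (and its linearization), avoiding the scalar-product machinery and the exchange law entirely; it is arguably more elementary, at the mild cost of a double-index bookkeeping step. Both proofs pivot on the same two inputs — monogenicity $Df=0$ and Euler's homogeneity relation — and arrive at $\sum_i \e_i(\ux\,\partial_i f) = -2kf$, whence $D(\ux f) = -(2k+7)f$.
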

\begin{proof}
	We have 
	$$ D (\ux f(\ux))=\sum_{i=1}^{7} \e_i \p_i (\ux f)=-7 f(\ux)+\sum_{i=1}^{7} \e_i(\ux \p_if(\ux)) $$
	For evaluating the last term on the right side we observe that for $t\in \O$ we have
	$$ \left\langle \e_i(\ux\p_if(\ux)),t\right\rangle =\left\langle \ux\p_if(\ux),-\e_i t\right\rangle =-2\left\langle \ux,\e_i\right\rangle \left\langle \p_i f(\ux),t\right\rangle +\left\langle \ux t,\e_i\p_if(\ux)\right\rangle $$
	by the braid and exchange properties (see \cite{CS06}). 
	Taking the sum for $i=1,...,7$ we have
	$$ \left\langle \sum_{i=1}^{7} \e_i(\ux \p_if(\ux)),t\right\rangle =\left\langle -2\underbrace{\sum_{i=1}^{7}x_i\p_i f}_{=kf(\ux) },t\right\rangle +\left\langle \ux t,\underbrace{ D f(\ux)}_{=0}\right\rangle $$ 
	and since it holds for all $t$ we have 
	$$ \sum_{i=1}^{7} \e_i(\ux \p_if(\ux))= -2kf(\ux)$$
	and this yields the conclusion. 
\end{proof}
\begin{lemma}\label{l4}
	Let $f\in \M^k(\R^7,\O)$ and $s$ a positive integer. The following holds
	\begin{equation}
	D (\ux^s f(\ux))=\begin{cases}
	-s \ux^{s-1} f(\ux) \text{ if $s$ is even } \\
	-(s+6+2k)\ux^{s-1}f(\ux)\text{ if $s$ is odd}
	\end{cases}
	\end{equation}
\end{lemma}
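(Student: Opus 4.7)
The plan is to use the fact that $\ux \in \R^7$ is purely imaginary, so $\bar{\ux}=-\ux$ and hence $\ux^2 = -\bar{\ux}\ux = -|\ux|^2 \in \R$. This immediately gives the clean formulas
$$\ux^{2m} = (-1)^m |\ux|^{2m} \in \R, \qquad \ux^{2m+1} = (-1)^m |\ux|^{2m}\,\ux,$$
and the attractive feature is that in each of the two parities we have pulled out a \emph{real} scalar factor, so questions of non-associativity disappear for that factor. I would treat the two parities separately.

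For the even case $s=2m$, I would write $\ux^s f(\ux) = (-1)^m |\ux|^{2m} f(\ux)$ and apply $D$ by the ordinary product rule. Since $\partial_i|\ux|^{2m}=2m|\ux|^{2m-2}x_i$ and $Df=0$ by hypothesis, and since each coefficient $x_i$ is real (so no alternation issues arise in $\sum \e_i x_i f = \ux f$), this yields $D(\ux^s f)=(-1)^m\,2m\,|\ux|^{2m-2}\ux f$. Rewriting $|\ux|^{2m-2}\ux = (-1)^{m-1}\ux^{2m-1}$ collapses the sign and gives exactly $-s\,\ux^{s-1}f(\ux)$.

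For the odd case $s=2m+1$, I would write $\ux^s f = (-1)^m |\ux|^{2m}(\ux f(\ux))$ and again apply $D$. The same computation as above (with the vector-valued function $\ux f$ in place of $f$) gives
$$D\bigl(|\ux|^{2m}(\ux f)\bigr) = 2m|\ux|^{2m-2}\,\ux(\ux f) + |\ux|^{2m}\, D(\ux f).$$
The first term is evaluated using the alternative-algebra identity $\bar{\ux}(\ux f)=|\ux|^2 f$ from Remark \ref{27}, which gives $\ux(\ux f)=-|\ux|^2 f$; the second term is precisely the content of Lemma \ref{28}, $D(\ux f)=-(2k+7)f$. Assembling everything yields $-(2m+2k+7)|\ux|^{2m}f$, and reabsorbing $(-1)^m|\ux|^{2m}=\ux^{2m}=\ux^{s-1}$ produces $-(s+6+2k)\,\ux^{s-1}f(\ux)$, as claimed.

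The only delicate step is justifying the manipulations $\sum_i (\e_i x_i)w = \ux w$ and $\ux(\ux f)=-|\ux|^2 f$ in the non-associative setting. The first is immediate because the $x_i$ are real, so $\e_i x_i = x_i\e_i$ commutes with everything and distributivity of right multiplication by $w$ gives the sum collapse. The second is exactly an instance of the alternative identity $\bar x(xa)=|x|^2 a$ recorded in Remark \ref{27}. Once these two points are observed, nothing else in the derivation requires associativity, and the formula for both parities falls out of the base case Lemma \ref{28} together with the reality of $\ux^2$.
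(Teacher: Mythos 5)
Your proof is correct and takes essentially the same route as the paper: write $\ux^s$ as a real scalar $|\ux|^{2\lfloor s/2\rfloor}$ times $1$ or $\ux$, apply the usual product rule to the real scalar (so non-associativity is harmless there), and finish with the alternative identity $\ux(\ux f)=-|\ux|^2 f$ together with Lemma~\ref{28}. The paper treats the odd case explicitly and merely says the even case is similar; you do both, and your bookkeeping of the constants (in particular the factor $2n$ from $\p_i|\ux|^{2n}$) is cleaner than the paper's displayed computation.
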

\begin{proof}
	We begin with the case when $s=2n+1$. Thanks to Remark \ref{27} we have that
	
	\begin{align*}
    D(\ux^{2n+1} f(\ux))=&(-1)^n D(|\ux|^{2n} \ux f(\ux))\\ 
    =&(-1)^n n|\ux|^{2n-2} \ux^2f(\ux) +(-1)^n |\ux|^{2n} \sum_{i=1}^{7} \e_i(\e_i f(\ux)) \\
    &+(-1)^n|x|^{2n}\sum_{i=0}^{7} \e_i(\ux \p_if(\ux))\\
    =&-(s+6+2k)\ux^{s-1}f(\ux)
    \end{align*}
    
	where the last line follows from Lemma \ref{28}. The case $s$ even is similar.  
	
\end{proof}
 We can now proceed as in the other cases
\begin{proof}[Proof of Proposition \ref{p4}]
	By a similar argument we see that at a point, say $0$, we have that
	$$\frac{|\nabla u|^2}{2u\Delta u}(0)=\frac{\sum_{i=0}^{7} \left( \int_{\B^8}\left[ \p_i f_{m+1},f_m\right] (x)d\lambda_8\right) ^2 }{\left( \int_{\B^8} |f_m(x)|^2d\lambda_8\right) \left( \sum_{i=0}^{7}\int_{\B^8} |\p_i f_{m+1}(x)|^2 d\lambda_8\right)}$$
	where $f_m$ and $f_{m+1}$ are the terms of degree $m$ and $m+1$  in the Taylor expansion of $f$ at $0$. 
	
	The proof reduces to prove that $M=\frac{m+7}{2(m+6)}$ where
	\begin{equation*}
	M :=\max_{\substack{0\neq f_m\in \M^m(\O,\O)\\0\neq f_{m+1}\in \M^{m+1}(\O,\O) } } \frac{\sum_{i=0}^{7} \left( \int_{\B^8}\left[ \p_i f_{m+1},f_m\right] (x)d\lambda_8\right) ^2 }{\left( \int_{\B^8} |f_m(x)|^2d\lambda_8\right) \left( \sum_{i=0}^{7}\int_{\B^8} |\p_i f_{m+1}(x)|^2 d\lambda_8\right)}. 
	\end{equation*}
	By repeating the same argument as in section \ref{s2}, and using the fact that $f(ux)$ is monogenic for all $u\in \O$, we have
	\begin{equation}
	M=\max_{ f_{m+1} \in \M^{m+1}(\O,\O)} \left\lbrace \frac{1}{2(m+5)(m+1)}\frac{\|\p_0 f_{m+1}\|^2}{\| f_{m+1}\|^2}\right\rbrace. 
	\end{equation} 
	At this point we only need to find an 
	orthogonal decomposition of $\M^{m+1} (\O,\O)$ of the type described in Remark \ref{r1}. 
	We start by finding the monogenic extension to $\O$ of $\ux^j f(\ux)$ where $f\in \M^k(\R^7,\O)$. By Lemma \ref{l4} we have
	$$ \widetilde{\ux^jf(\ux)}=\sum_{i=0}^{j} (c_{k,j,i} x_0^i\ux^{j-i})f(\ux) $$
	for some real coefficients $c_{k,j,i}$ which do not depend on $f$. Let $r^2=|x|^2$ and set $\widetilde{\ux^jf(\ux)}=r^j(A(\frac{x_0}{r})+B(\frac{x_0}{r})\frac{\ux}{r})f(\ux)$ and by
	imposing $\dib \widetilde{\ux^jf(\ux)}=0$ we found that the extension is given by
	$$ \widetilde{\ux^j f(\ux)}=d_{k,j}|x|^j\left( C^{ k+3}_{j} (\frac {x_0}{|x|}) +\frac{ 2k+6}{2k+j+6} C^{4 +k}_{j-1}(\frac {x_0}{|x|}) \frac{\ux}{|x|} \right) f(\ux).$$
	The computations are exactly like the ones in section \ref{s3} with $n=8$. In the end we have $M=\frac{m+7}{2(m+4)}$ which finishes the proof. 
\end{proof}
\begin{proof}[Proof of Theorem \ref{t4}]
	Follows from Proposition \ref{p4} and Lemma \ref{l1}.
\end{proof}

\begin{remark}
In Theorems \ref{t1}, \ref{t3} and \ref{t4} we saw that, according to the dimension $n$ of the algebra where $f$ takes its values, $|\nabla^m f|^{\alpha_{0,m,n}}$ is subharmonic for $\alpha_{0,m,n}=\frac{n-2}{n+m-1}$. As observed in \cite{CZ64} Theorem $2$, this is the best possibile choice for the exponent $\alpha_{0,m,n}$ indeed if $\phi(|\nabla^m f|)$ is subharmonic for any monogenic or regular functions $f$ and $\phi$ is continuous then $\phi(t)=\omega(t^{\alpha_{0,m,n}})$ where $\omega:\R_{\geq 0}\to \R$ is a convex increasing function. The proof of this fact follows without substantial modification the proof in \cite{CZ64}.     
\end{remark}
\section*{Acknowledgements}
 The authors wish to thank Professor Irene Sabadini for useful discussions. 
 
 								\bibliographystyle{alpha}

 							\end{document}